\newtheorem{thm}{Theorem}[section]
\newtheorem{lem}[thm]{Lemma}
\newtheorem{prop}[thm]{Proposition}
\newtheorem{defn}{Definition}
\theoremstyle{definition}
\newtheorem{rem}[thm]{Remark}
\theoremstyle{remark}
\newcommand{\ubar}[1]{\underaccent{\bar}{#1}}
\newcommand{\haus}{\mathcal{H}^{n-1}}
\newcommand{\ds}{\displaystyle}
\newcommand{\R}{\mathbb{R}}
\newcommand{\W}{\mathcal W}
\newcommand{\de}{\partial}
\DeclareMathOperator{\diam}{diam}
\title{Anisotropic Isoperimetric Inequalities involving Boundary Momentum, Perimeter and Volume} 
\author{
	Gloria Paoli%
	\thanks{
		Universit\`a degli studi di Napoli Federico II, Dipartimento di Matematica e Applicazioni ``R. Caccioppoli'', Via Cintia, Monte S. Angelo - 80126 Napoli, Italia. Email: gloria.paoli@unina.it }
	{, }
	Leonardo Trani%
	\thanks{Universit\`a degli Studi di Napoli Federico II, Dipartimento di Matematica e Applicazioni ``R. Caccioppoli'', Via Cintia, Monte S. Angelo - 80126 Napoli, Italia. Email: leonardo.trani@unina.it}
}
\begin{document}
	\maketitle
\begin{abstract}
We consider a scale invariant functional involving the anisotropic $p$-momentum, the anisotropic perimeter and the volume.
We show that the Wulff shape, associated with the Finsler norm $F$ considered and centered at the origin, is the unique minimizer of the anisotropic functional taken into consideration among all bounded convex sets.
\end{abstract}

\textbf{Keywords:} Shape derivative, inverse anisotropic mean curvature, anisotropic isoperimetric inequality\\

\textbf{MSC[2010]:} 49Q10, 47J30

\section{Introduction}
Let $\Omega$ be a bounded, connected, open subset of $\mathbb{R}^n$, $n\geq2$, with Lipschitz boundary; its Steklov eigenvalues related to the Laplacian are the real numbers $\sigma\geq 0$ such that   
\begin{equation*}
\begin{cases}
-\Delta u=0 &\mbox{in}\ \Omega\\[.2cm]
\dfrac{\partial u}{\partial \nu}=\sigma u &\mbox{on}\ \de\Omega
\end{cases}
\end{equation*}
admits non trivial $H^1(\Omega)$ solutions. In particular, the first non trivial Steklov eigenvalue of $\Omega$ is characterised by the following expression (see \cite{h}):
$$\sigma_1(\Omega)=\min\left\{\dfrac{\ds\int_{\Omega}|\nabla v|^2\;dx}{\ds\int_{\de\Omega}v^2\;d\haus(x)} \; :\; v\in H^1(\Omega)\setminus\{0\},\;\int_{\de\Omega}v\;d\haus=0    \right\}.$$ 
If we consider the problem of maximizing $\sigma_1$ under volume constraint, the Brock-Weinstock inequality tells us that the unique solutions to this problem are given by balls, for more details see \cite{br}. In scaling invariant form, the inequality has the form:
$$ \sigma_1(\Omega)\leq \left( \dfrac{\omega_n}{V(\Omega)}  \right)^{\frac{1}{n}},$$
where $\omega_n$ is the Lebesgue measure of the $n$-dimensional ball of radius $1$ and $V(\Omega)$ is the Lebesgue measure of the set. Here equality holds if and only if $\Omega$ is a ball. We point out that in dimension $n=2$ there is a stronger result, the so called Weinstock inequality, that states that disks are still maximizers among all simply connected sets of given perimeter.
\newline
The main result of \cite{bfnt} is the following theorem. We denote by $P(\Omega)$ the perimeter of the set $\Omega$ and by $B$ the $n$-dimensional unit ball.
\begin{prop}{\bf \cite[Theorem 3.1]{bfnt}}
	Let $\Omega$ be a bounded, open  convex set of $\mathbb{R}^n$. Then 
	\begin{equation}\label{steklov_convex}
	\sigma_1(\Omega)\left( P(\Omega)\right)^{\frac{1}{n-1}} \leq \sigma_1(B)\left( P(B)\right)^{\frac{1}{n-1}}
	\end{equation}
	and there is  equality only if $\Omega$ is a ball centered at the origin.
\end{prop}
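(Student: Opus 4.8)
The plan is to combine the variational characterisation of $\sigma_1$ with a purely geometric inequality for convex bodies, the latter being the real content. First I would exploit the translation invariance of both $\sigma_1$ and $P$ to place $\O$ so that its boundary barycentre sits at the origin, i.e. $\int_{\de\O} x_i \, d\haus = 0$ for every $i=1,\dots,n$. With this normalisation each coordinate function $x_i$ is an admissible competitor in the minimum characterisation of $\sigma_1(\O)$, since it has zero boundary average. As $\abs{\D x_i}^2 = 1$, testing with $x_i$ and summing over $i$ gives
\begin{equation*}
\sigma_1(\O)\int_{\de\O}\abs{x}^2\,d\haus \;\le\; \sum_{i=1}^n\int_\O \abs{\D x_i}^2\,dx \;=\; n\,V(\O).
\end{equation*}
Writing $M(\O):=\int_{\de\O}\abs{x}^2\,d\haus$ for the boundary momentum, this reads $\sigma_1(\O)\le n\,V(\O)/M(\O)$.

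Next I would record the value on the ball: the coordinate functions are exactly the first Steklov eigenfunctions of $B$, so $\sigma_1(B)=1$, while $P(B)=n\omega_n$, whence the right-hand side of the claim equals $(n\omega_n)^{1/(n-1)}$. In view of the bound above, the theorem follows once one establishes the scale-invariant geometric inequality
\begin{equation}\label{geom}
n\,V(\O)\,P(\O)^{\frac{1}{n-1}} \;\le\; (n\omega_n)^{\frac1{n-1}}\,M(\O),
\end{equation}
valid for every bounded convex $\O$ with barycentre at the origin, with equality for balls; a direct check shows both sides of \eqref{geom} agree on balls. Equivalently, I must show that the ball minimises the scale-invariant functional $\O\mapsto M(\O)\,V(\O)^{-1}P(\O)^{-1/(n-1)}$ among convex bodies.

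The inequality \eqref{geom} is the crux and the main obstacle. The naive route — bounding $n\,V=\int_{\de\O}x\cdot\nu\,d\haus$ by Cauchy--Schwarz as $n\,V\le M^{1/2}P^{1/2}$ — is too lossy: combined with the isoperimetric inequality it points in the wrong direction, so \eqref{geom} is genuinely not elementary. Instead I would argue variationally. A scaling analysis of degenerate convex bodies (flattening slabs and thin needles) shows that the functional in \eqref{geom} blows up in every degenerate direction, so a minimiser exists within convex bodies and is non-degenerate; granting the regularity needed to take shape derivatives, I would compute the first variation under a normal perturbation with speed $\phi$, using
\begin{equation*}
\delta V=\int_{\de\O}\phi\,d\haus,\qquad \delta P=\int_{\de\O}H\phi\,d\haus,\qquad \delta M=\int_{\de\O}\big(2\,x\cdot\nu+H\abs{x}^2\big)\phi\,d\haus,
\end{equation*}
where $H$ denotes the mean curvature (sum of principal curvatures) of $\de\O$. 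Vanishing of the first variation of $\log\big(M V^{-1}P^{-1/(n-1)}\big)$ for all admissible $\phi$ yields the overdetermined boundary condition
\begin{equation*}
\frac{2\,x\cdot\nu+H\abs{x}^2}{M(\O)}-\frac{1}{V(\O)}-\frac{H}{(n-1)P(\O)}=0\qquad\text{on }\de\O.
\end{equation*}

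The remaining and hardest step is the rigidity: to show that the only convex solutions of this overdetermined relation are spheres. Here I would rewrite the condition as an equation for the inverse mean curvature in terms of the support-type quantities $x\cdot\nu$ and $\abs{x}^2$, and close the argument by a Reilly/Heintze--Karcher integral identity or an Alexandrov moving-plane argument, each of which characterises the round sphere. Combining the rigidity with the normalisation that fixes the boundary barycentre at the origin forces $\O$ to be a ball centred at the origin, which also saturates the test-function step; this yields both \eqref{geom} and the equality statement of the theorem.
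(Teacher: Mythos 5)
Your reduction is correct and is exactly the strategy of \cite{bfnt} that the paper summarizes: after translating so that the boundary barycentre is at the origin, the coordinate functions are admissible test functions and give $\sigma_1(\Omega)\,M(\Omega)\le n V(\Omega)$, where $M(\Omega):=\int_{\partial\Omega}|x|^2\,d\mathcal{H}^{n-1}(x)$; the theorem then follows from the geometric inequality
\begin{equation*}
n\,V(\Omega)\,P(\Omega)^{\frac{1}{n-1}}\le (n\omega_n)^{\frac{1}{n-1}} M(\Omega),
\end{equation*}
which is precisely what one obtains by combining the paper's inequality \eqref{main_iso} with the classical isoperimetric inequality $P(\Omega)\ge n\omega_n^{1/n}V(\Omega)^{\frac{n-1}{n}}$. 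So the reduction is sound, and the entire content of \eqref{steklov_convex} is concentrated in this geometric inequality --- which is where your argument has genuine gaps.

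Two issues. First, the Euler--Lagrange identity you derive is not justified at a minimizer: a minimizer in the class of convex bodies need not be $C^2$ (so $H$ need not exist pointwise), and the convexity constraint makes normal perturbations one-sided wherever the minimizer's boundary has flat pieces, so stationarity yields only a variational inequality rather than the pointwise overdetermined equation; your computation requires the minimizer to be smooth and uniformly convex, which is not granted. Second, and decisively, the rigidity step --- that centred spheres are the only convex bodies satisfying
\begin{equation*}
\frac{2\,x\cdot\nu + H|x|^2}{M(\Omega)}=\frac{1}{V(\Omega)}+\frac{H}{(n-1)P(\Omega)}\qquad\text{on }\partial\Omega
\end{equation*}
--- is asserted, not proved, and it is the whole difficulty of the theorem. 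The tools you invoke do not apply here: Alexandrov's moving-plane method needs a condition invariant under reflections and translations, while this one contains $x\cdot\nu$ and $|x|^2$ explicitly; Reilly/Heintze--Karcher identities give rigidity for constant (higher-order) mean curvature hypersurfaces, not for this mixed curvature--position relation. It is exactly to avoid classifying critical points that \cite{bfnt} (and its anisotropic generalization in Section 3 of this paper) proceeds differently: one introduces the excess $E(\Omega)=r_{\max}(\Omega)-\frac{M(\Omega)}{nV(\Omega)}$, shows a minimizer cannot have $E<0$ by computing the sign of the shape derivative along the inverse mean curvature flow (the Heintze--Karcher inequality $\int_{\partial\Omega}H^{-1}\,d\mathcal{H}^{n-1}\ge\frac{n}{n-1}V(\Omega)$ of \cite{r} enters there as an inequality along the flow, not as a rigidity statement), shows a minimizer cannot have $E>0$ by the non-smooth but convexity-preserving perturbation obtained by cutting $\Omega$ with a halfspace near the farthest boundary point, and finally proves that a minimizer with $E=0$ must be a centred ball. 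Unless you replace your rigidity claim with an actual argument of this kind, the proposal does not prove the theorem.
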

The core of the strategy to prove \eqref{steklov_convex} is to show that the  following isoperimetric inequality holds true among all bounded, open and convex sets of $\mathbb{R}^n$:
\begin{equation}\label{main_iso}
\dfrac{\ds\int_{\de\Omega}|x|^2\;d\haus(x) }{\left(\ds\int_{\de\Omega}d\haus(x)\right)\;V(\Omega)^{\frac{2}{n}}}\geq\omega_n^{\frac{-2}{n}}
\end{equation}
and equality holds if and  only if $\Omega$ is a ball centered at the origin. In order to prove the latter inequality, the authors use the notions of shape derivative and inverse mean curvature flow.
\newline

In the present paper we prove an anisotropic generalization of the inequality \eqref{main_iso}. So, this work is devoted to the study of a particular optimization problem, where the scale invariant functional to be optimized is 
$$\mathcal{F}(\Omega)=\dfrac{\ds\int_{\partial \Omega} [F^o(x) ]^p\;F(\nu_{\partial\Omega}(x))\;d\mathcal{H}^{n-1}(x) }{\left(\ds\int_{\partial \Omega}F(\nu_{\partial\Omega}(x))\;d\mathcal{H}^{n-1}(x) \right)V(\Omega)^{\frac{p}{n}}},$$
where $p>1$, $\nu_{\de\Omega}$ is the outward unit normal to $\de\Omega$, $F$ is a Finsler norm and $F^o$ is its dual norm  (see Section $2$ for definitions). As we can see, we are considering a functional involving a particular weighted $p$-momentum and area measure. Physically, the term $F(\nu_{\de\Omega})$ plays the role of a surface tension of a flat surface whose normal is $\nu_{\de\Omega}$ and can be considered as the anisotropy.
\newline
For the ease of the reader, it is worth mentioning here the Betta-Brock-Mercaldo-Posteraro weighted isoperimetric inequality, proved in \cite{bbmp}, that is
\begin{equation}\label{betta}
\int_{\de\Omega} a(|x|)\;d\haus(x)\geq\int_{\de B} a(|x|)\;d\haus(x),
\end{equation}
where $\Omega\subseteq\mathbb{R}^n$ is bounded with smooth boundary, $B$ is a ball centered at the origin having the same measure of $\Omega$ and $a$ is a given non negative function satisfying suitable assumptions.

By adapting the arguments of proof in \cite{bfnt}, we are able to prove the following theorem, that is the non linear counterpart of \eqref{main_iso}. We recall that the Wulff shape of radius $r$ centered at the point $x_0$ is defined as
$$\mathcal W_r(x_0) = \{  \xi \in  \R^n \colon F^o(\xi-x_0)< r \}.$$
We denote by $\kappa_n$ the volume of the Wulff shape of radius $1$ centered at the origin.
\begin{thm}[Main Result]
	Let $\Omega$ be a bounded, open convex set of $\mathbb{R}^n$. Then 
	$$\mathcal{F}(\Omega)\geq \kappa_n^{-\frac{p}{n}}, $$
	and   equality holds only for Wulff shapes centered at the origin.
\end{thm}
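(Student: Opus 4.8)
The plan is to reproduce, in the anisotropic setting, the scheme of \cite{bfnt}: reduce the statement to a constrained shape optimization, derive the Euler-Lagrange equation for an optimal set through a shape derivative, and then use the inverse anisotropic mean curvature flow together with an anisotropic rigidity result to identify the optimizers with Wulff shapes. First I would record the two structural facts that frame the argument. Since $F^o$ is positively $1$-homogeneous and $F(\nu_{\de\Omega})$ is unchanged under dilations of $\Omega$, a direct computation shows that $\mathcal F$ is invariant under the scaling $\Omega\mapsto\lambda\Omega$, which lets one normalise, say $V(\Omega)=\kappa_n$, without loss of generality. Evaluating $\mathcal F$ on the unit Wulff shape $\W=\{F^o<1\}$, where $[F^o]^p\equiv 1$ on $\de\W$, the numerator reduces to the anisotropic perimeter $P_F(\W):=\int_{\de\W}F(\nu_{\de\W})\,d\haus$, which cancels the same factor in the denominator and leaves exactly $\kappa_n^{-p/n}$; thus $\W$ is a candidate extremal and the constant is sharp.

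Next I would establish existence of a minimiser of $\mathcal F$ among bounded open convex sets with $V(\Omega)=\kappa_n$. By the Blaschke selection theorem a minimising sequence of convex bodies converges, up to a subsequence, in the Hausdorff distance to a convex body, and on convex sets both $P_F$ and the weighted $p$-momentum behave continuously enough along such convergence to pass to the limit. The delicate point is to rule out escape to infinity by translation: since the integrand carries the weight $[F^o(x)]^p$, translating $\Omega$ away from the origin strictly increases the numerator while leaving $P_F$ and $V$ untouched, so any minimiser is bounded and centred with respect to the anisotropy, in particular it contains the origin in its interior.

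I would then compute the shape derivative of $\mathcal F$ at a (smooth, or suitably approximated) minimiser $\Omega^*$ under volume-preserving normal perturbations. Recalling that the first variation of an anisotropic area integral produces the anisotropic mean curvature $H_F=\dive_\tau\!\big(\nabla F(\nu_{\de\Omega})\big)$, the vanishing of the first variation yields an overdetermined Euler-Lagrange condition on $\de\Omega^*$ relating $[F^o(x)]^p$ (and its anisotropic gradient) to $H_F$, with Lagrange constants fixed by the volume constraint. The decisive step is to show that this condition forces $\de\Omega^*$ to be a Wulff shape centred at the origin. Here the inverse anisotropic mean curvature flow $\de_t x = H_F^{-1}\,\nabla F(\nu_{\de\Omega})$ enters: convex bodies stay convex and converge, after rescaling, to a Wulff shape, along which $H_F$ is constant. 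The plan is to prove that $\mathcal F$ is monotone along the flow with the Wulff shape as the only stationary configuration, so that $\mathcal F(\Omega)\ge\mathcal F(\W)=\kappa_n^{-p/n}$; the rigidity of the associated anisotropic Heintze-Karcher / Alexandrov inequality, which saturates exactly on Wulff shapes centred at the origin, then also yields the equality case.

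I expect the principal obstacle to be precisely this monotonicity computation in the Finsler setting. Differentiating the numerator and $P_F$ along the flow produces first- and second-variation formulas for $H_F$ that are considerably heavier than their Euclidean counterparts, and one must carefully track the interplay among $F$, $F^o$ and the Cahn-Hoffman normal $\nabla F(\nu_{\de\Omega})$ to secure the correct sign. Guaranteeing enough regularity of $\de\Omega^*$ to run the flow, or to justify the convex approximation, is an additional technical hurdle that I would handle by a smoothing argument, passing to the limit in the already-proven inequality for smooth uniformly convex sets.
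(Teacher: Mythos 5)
Your overall frame (scale invariance, value $\kappa_n^{-p/n}$ on the Wulff shape, existence of a minimizer via Blaschke selection, first variation, inverse anisotropic mean curvature flow) matches the paper's, but the step that actually carries your plan --- monotonicity of $\mathcal F$ along the IAMCF, with the Wulff shape as the only stationary configuration --- is a genuine gap, and it is precisely where the paper does something different. The first variation of $\mathcal F$ along the flow (the paper's formula \eqref{imp_2}) can be sign-controlled only through the anisotropic $p$-excess $E_F(\Omega)=(r^F_{\max}(\Omega))^{p-1}-\frac{M_F(\Omega)}{nV(\Omega)}$: using $\langle\nabla F^o(x),\nu^F_{\de\Omega}(x)\rangle\le 1$ and $F^o(x)\le r^F_{\max}(\Omega)$ one obtains
$$\frac{d}{dt}\mathcal F(\Omega(t))\Big|_{t=0}\;\le\; \frac{p\,E_F(\Omega)}{P_F(\Omega)^2V(\Omega)^{p/n}}\int_{\de\Omega}\frac{F(\nu_{\de\Omega}(x))}{H^F_{\de\Omega}(x)}\,d\haus(x),$$
which is strictly negative when $E_F(\Omega)<0$ but carries no information when $E_F(\Omega)>0$ --- and the paper exhibits convex sets with strictly positive excess (the perturbed elliptic domains $\mathcal E_\epsilon$ in Section 3.4), so this regime cannot be wished away. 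Note also that the unweighted inequality \eqref{sec_ineq} (integral of the bracket against $F(\nu_{\de\Omega})\,d\haus$ is $\le 0$) does not survive the weighting by $1/H^F_{\de\Omega}$, since the weight correlates with position on the boundary; so there is no cheap argument that the flow decreases $\mathcal F$ in general, and no anisotropic Heintze--Karcher rigidity can substitute for a monotonicity that is simply not established. You flagged this computation as the ``principal obstacle'' but left it unresolved; it is not a technical hurdle of the Finsler setting, it is the point where a flow-only strategy breaks down (already in the Euclidean case of \cite{bfnt}).

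What the paper actually does is a trichotomy on the sign of $E_F$ at a minimizer, only one branch of which uses the flow: (i) if $E_F(\Omega)<0$, the displayed bound makes the flow derivative negative, contradicting minimality (with a smoothing argument when $\Omega\notin C_F^{\infty,+}$); (ii) if $E_F(\Omega)>0$, the contradiction comes from a completely different, purely local perturbation: one cuts off a cap of width $\epsilon$ at the farthest point $x^F_{\max}(\Omega)$ by a halfspace, proves $|\Delta V|\le C(\Omega)|\Delta P_F|$ and $\Delta M_F\le p(r^F_{\max}(\Omega))^{p-1}\Delta V+(r^F_{\max}(\Omega))^p\Delta P_F+o(\Delta P_F)+o(\Delta V)$, and combines these with the strict inequality $M_F(\Omega)/P_F(\Omega)<(r^F_{\max}(\Omega))^p$ for non-Wulff sets (Lemma \ref{new_lemma}) to get $\Delta\mathcal F<0$; (iii) if $E_F(\Omega)=0$, the same Lemma \ref{new_lemma} forces the minimizer to be a Wulff shape centered at the origin, which also settles the equality case that you attribute vaguely to Heintze--Karcher rigidity. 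To repair your proposal you must supply a substitute for step (ii), since your argument covers at best the negative-excess regime. Separately, your existence step needs sharpening: the danger along a minimizing sequence is unbounded diameter, not translation of a fixed set; the paper excludes it by showing that $\diam_F(\Omega_i)\to\infty$ would force $\lim M_F(\Omega_i)/P_F(\Omega_i)\ge 2^p$, while minimality forces this ratio to stay $\le 1$.
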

Since we are concerned with investigating the first variation of $ \mathcal{F}(\Omega)$ from the point of view of Finsler geometry, we need to use an integration by part formula on manifolds.
Moreover, thanks to an approximation argument, we can compute it  assuming the smoothness of the boundary of the sets. 
\newline
A fundamental tool that we use is the inverse anisotropic mean curvature flow (we refer to \cite{x} and \cite{dpgx} for details). Roughly speaking, the smooth boundary $\de\Omega$ of an open set $\Omega=\Omega(0)$ flows by anisotropic inverse mean curvature if there exists a time dependent family $\left(\de\Omega(t)\right)_{t\in[0,T)}$ of smooth boundaries such that the  anisotropic normal velocity at any point $x\in\de\Omega(t)$ is equal to the inverse of the  anisotropic mean curvature of $\de\Omega(t)$ at $x$.
We will give the exact definition of anisotropic mean curvature (that we denote by $H_F$) and of anisotropic normal in Section $2.4$. 
We make also use of the following anisotropic version of the Heintze-Karcher inequality
$$ \int_{\de\Omega}\dfrac{F(\nu_{\de\Omega})}{H_F}d\haus\geq \dfrac{n}{n-1}V(\Omega),$$
see \cite{r} for the Euclidean case and \cite{xz} for its anisotropic analogous.

The results of the present paper are mainly motivated by  possible applications to the Steklov spectrum problem for the pseudo $p$-Laplacian (we point out \cite{bf} as a reference).

The structure of the work is the following. In sections $2.1$ and $2.2$ we give some notation and we state the main hypothesis on the norm $F$. In section $2.3$ we recall some basic definitions and some properties of the Euclidean perimeter. Section $2.4$ is devoted to the study of the anisotropic case; we give here some  definitions and provide a framework in order to make the exposition as self contained as possible.
Finally, in the last chapter, we prove the main theorem.

\section{Preliminaries}

\subsection{Notation}
In the following we denote by $\langle\cdot,\cdot\rangle$ the standard Euclidean scalar product in $\mathbb{R}^n$ and by $|\cdot|$ the Euclidean norm in $\mathbb{R}^n$, for $n\geq 2$.
We denote with $\mathcal{L}^n$ the Lebesgue measure in $\mathbb{R}^n$ and with $\mathcal{H}^k$, for $k\in [0,n]$, the $k-$dimensional Hausdorff measure in $\mathbb{R}^n$.

If $\Omega\subseteq \mathbb{R}^n$, ${\rm Lip}(\de \Omega)$ (resp. ${\rm Lip}(\de \Omega; \mathbb{R}^n)$) is the class of all Lipschitz functions (resp. vector fields) defined on $\de\Omega$.
If $\Omega$ has Lipschitz boundary, for $\mathcal{H}^{n-1}-$ almost every $x\in\partial\Omega$, we denote by $\nu_{\de\Omega}(x)$ the outward unit Euclidean normal to $\partial \Omega$ at $x$ and by $T_x(\de\Omega)$ the tangent hyperplane to $\de\Omega$ at $x$.

\subsection{Finsler norm}

Let $F$ be  a Finsler norm on  $\R^{n}$, i.e. $F$ is a convex non negative function such that 
\begin{equation}
\label{eq:omo}
F(t\xi)=|t|F(\xi), \quad t\in \R,\,\xi \in  \R^{n}, 
\end{equation}
and 
\begin{equation}
\label{eq:lin}
a|\xi| \le F(\xi),\quad \xi \in  \R^{n},
\end{equation}
for some constant $a>0$. The hypotheses on $F$ imply that there exists $b\ge a$ such that
\begin{equation*}
\label{upb}
F(\xi)\le b |\xi|,\quad \xi \in  \R^{n}.
\end{equation*}
Moreover, throughout the paper, we will assume that $F\in C^{2}(\mathbb R^{n}\setminus \{0\})$, and
\begin{equation*}
\label{strong}
[F^{p}]_{\xi\xi}(\xi)\text{ is positive definite in } \R^{n}\setminus\{0\},
\end{equation*}
with $1<p<+\infty$. 
The polar function $F^o\colon \R^n \rightarrow [0,+\infty[$ 
of $F$ is defined as
\begin{equation*}
F^o(v)=\sup_{\xi \ne 0} \frac{\langle \xi, v\rangle}{F(\xi)}. 
\end{equation*}
It is easy to verify that also $F^o$ is a convex function
which satisfies properties \eqref{eq:omo} and
\eqref{eq:lin}. $F$ and $F^o$ are usually called dual Finsler norms. Furthermore, 
\begin{equation*}
F(v)=\sup_{\xi \ne 0} \frac{\langle \xi, v\rangle}{F^o(\xi)}.
\end{equation*}
The above property implies the following anisotropic version of the Cauchy Schwartz inequality
\begin{equation*}
\label{imp}
|\langle \xi, \eta\rangle| \le F(\xi) F^{o}(\eta), \qquad \forall \xi, \eta \in  \R^{n}.
\end{equation*}

We denote by 
$$\mathcal W = \{  \xi \in  \R^n \colon F^o(\xi)< 1 \},$$
the  Wulff shape centered at the origin and we put
$\kappa_n=V(\mathcal W)$. Moreover, we assume that $\mathcal{W}$ is uniformly convex, i.e. there exists a constant $c>0$ such that the principal curvatures $\kappa_i(\mathcal{W})> c$, for every $i=1,\cdots, n-1$. 

We conclude this paragraph reporting the following properties of $F$ and $F^o$ (see for istance \cite{bpa}):
\begin{gather*}
\label{prima}
\langle \nabla F(\xi) , \xi \rangle= F(\xi), \quad  \langle\nabla F^{o} (\xi), \xi \rangle
= F^{o}(\xi),\qquad \forall \xi \in
\R^n\setminus \{0\}
\\
\label{seconda} F(  \nabla F^o(\xi))=F^o( \nabla F(\xi))=1,\quad \forall \xi \in
\R^n\setminus \{0\}, 
\\
\label{terza} 
F^o(\xi)   \nabla F( \nabla F^o(\xi) ) = F(\xi) 
\nabla F^o\left(  \nabla F(\xi) \right) = \xi\qquad \forall \xi \in
\R^n\setminus \{0\}. 
\end{gather*}

\subsection{The first variation of euclidean perimeter}
For the content of  this section we refer, for instance, to Chapter $2$ in  \cite{b}  and \cite{m}  (in particular Section $17.3$). We start from recalling  the definition of tangential gradient.
\begin{defn}
	Let $\Omega$ be an open, bounded subset of $\mathbb{R}^n$ with $C^{\infty}$ boundary and let $u:\mathbb{R}^n\rightarrow\mathbb{R}$ be a Lipschitz function. We can define the tangential gradient of $u$ for almost every $x\in \de\Omega$ as follows:
	$$ \nabla^{\de\Omega}u(x)=\nabla u(x)-\langle \nabla u(x),\nu_{\de\Omega}(x)\rangle\nu_{\de\Omega}(x),$$
	whenever $\nabla u$  exists at $x$.
\end{defn}

If we consider a vector field $T\in C^1_c(\mathbb{R}^n;\mathbb{R}^n)$, we can also define the tangential divergence of $T$ on $\de\Omega$ by the formula
$$ {\rm div}^{\de\Omega}T={\rm div}T-\langle \nabla T\nu_{\de\Omega},\nu_{\de\Omega}\rangle.$$

The following theorem is an extention to hypersurfaces in $\mathbb{R}^n$ of Gauss-Green theorem (see in \cite{m} Theorem $11.8$ combined with Remark $17.6$).
\begin{thm}
	Let  $\Omega$ be a subset of $\mathbb{R}^n$ with $C^2$ boundary.
	Then there exists a continuous scalar function $H_{\de\Omega}:\de\Omega\rightarrow\mathbb{R}$ such that for every $\varphi\in C^1_c(\mathbb{R}^n)$
	$$ \int_{\de\Omega}\nabla^{\de\Omega}\varphi(x)\;d\mathcal{H}^{n-1}(x)=\int_{\de\Omega}\varphi(x){H}_{\de\Omega}(x)\nu_{\de\Omega}(x)\;d\mathcal{H}^{n-1}(x).$$
\end{thm}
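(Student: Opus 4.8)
The plan is to reduce the vector-valued identity to a family of scalar divergence identities on the closed hypersurface $\de\Omega$, one for each coordinate direction, and then to recognise the desired scalar function as the tangential divergence of the normal field. First I fix the canonical basis $e_1,\dots,e_n$ of $\R^n$ and, for each index $i$, I consider the vector field $T_i=\varphi\,e_i\in C^1_c(\R^n;\R^n)$. Using the definition ${\rm div}^{\de\Omega}T={\rm div}T-\langle\nabla T\nu_{\de\Omega},\nu_{\de\Omega}\rangle$ together with the fact that $e_i$ is constant, a direct computation gives
$$ {\rm div}^{\de\Omega}(\varphi e_i)=\langle\nabla\varphi,e_i\rangle-\langle\nabla\varphi,\nu_{\de\Omega}\rangle\langle e_i,\nu_{\de\Omega}\rangle=\langle\nabla^{\de\Omega}\varphi,e_i\rangle, $$
so that the $i$-th component of the left-hand side of the claimed formula is exactly $\int_{\de\Omega}{\rm div}^{\de\Omega}(\varphi e_i)\,d\haus$.

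Thus the whole statement follows once I establish the scalar tangential divergence theorem: for every $T\in C^1_c(\R^n;\R^n)$,
$$ \int_{\de\Omega}{\rm div}^{\de\Omega}T\,d\haus=\int_{\de\Omega}\langle T,\nu_{\de\Omega}\rangle\,H_{\de\Omega}\,d\haus,\qquad H_{\de\Omega}:={\rm div}^{\de\Omega}\nu_{\de\Omega}. $$
Indeed, applying this with $T=\varphi e_i$, whose normal component is $\varphi\langle e_i,\nu_{\de\Omega}\rangle$, reproduces precisely the $i$-th component of the candidate right-hand side $\int_{\de\Omega}\varphi\,H_{\de\Omega}\,\nu_{\de\Omega}\,d\haus$, and since $i$ is arbitrary the two vectors coincide. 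Continuity of $H_{\de\Omega}$ is then immediate from this definition: as $\de\Omega$ is of class $C^2$, the normal $\nu_{\de\Omega}$ is of class $C^1$ on $\de\Omega$, hence its tangential divergence is continuous.

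To prove the scalar identity I would localise by a finite partition of unity $\{\psi_k\}$ subordinate to coordinate neighbourhoods in which $\de\Omega$ is a $C^2$ graph, writing $T=\sum_k \psi_k T$; by linearity it suffices to treat a single field $T$ supported in one such chart. There I parametrise $\de\Omega$ by $x=\Phi(y)$, express $\haus$ through the area element $\sqrt{\det(D\Phi^{\top}D\Phi)}\,dy$, and rewrite $\int_{\de\Omega}{\rm div}^{\de\Omega}T\,d\haus$ as an integral over the parameter domain. Integrating by parts in $y$, the boundary contributions vanish because $\de\Omega$ is compact without boundary and $T$ has compact support, and the interior terms recombine into $\int_{\de\Omega}\langle T,\nu_{\de\Omega}\rangle\,{\rm div}^{\de\Omega}\nu_{\de\Omega}\,d\haus$. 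The main technical obstacle is precisely this chart computation: tracking how the Euclidean integration by parts in the parameter variables reassembles into the intrinsic tangential divergence and isolates the normal-times-mean-curvature factor, while verifying that the derivatives of the area-element term produce exactly the curvature contribution and nothing spurious. Since this passage is carried out in full in the cited reference, one may alternatively invoke \cite[Theorem 11.8 combined with Remark 17.6]{m} to obtain the scalar formula directly, and then conclude as in the first two paragraphs above.
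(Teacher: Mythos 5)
Your proposal is correct, but note that the paper does not actually prove this statement: it is quoted as background from \cite{m} (Theorem 11.8 combined with Remark 17.6), and the scalar identity you isolate as your key lemma is precisely what the paper records, again without proof, in the Remark immediately following the theorem --- derived there in the opposite direction, from the vector-valued statement. Your computation ${\rm div}^{\de\Omega}(\varphi e_i)=\langle\nabla\varphi,e_i\rangle-\langle\nabla\varphi,\nu_{\de\Omega}\rangle\langle e_i,\nu_{\de\Omega}\rangle=\langle\nabla^{\de\Omega}\varphi,e_i\rangle$ is right, and it shows the two formulations are equivalent, so taking the scalar tangential divergence theorem as primitive and deriving the vector form is a legitimate reversal of the paper's organization; what it buys you is that the mean curvature appears with an explicit formula, $H_{\de\Omega}={\rm div}^{\de\Omega}\nu_{\de\Omega}$, rather than as an abstractly postulated function. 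Two caveats. First, your chart and partition-of-unity argument for the scalar identity is only a sketch --- as you yourself admit, all the analytic content sits in the integration by parts in the parameter domain --- so as a self-contained proof it has a gap there; your fallback of invoking the same citation the paper relies on closes that gap at exactly the paper's level of rigor, so this is acceptable, but you should present the citation as the proof and the chart computation as commentary, not the reverse. Second, when you set $H_{\de\Omega}:={\rm div}^{\de\Omega}\nu_{\de\Omega}$ you should say in what sense this is defined, since the paper's tangential divergence is defined only for fields on $\R^n$, while $\nu_{\de\Omega}$ lives on the surface: either extend $\nu_{\de\Omega}$ to a $C^1$ field near $\de\Omega$ (for instance as $\nabla d$ with $d$ the signed distance, which is $C^2$ near a $C^2$ boundary) and observe that the tangential divergence is independent of the extension, or define it intrinsically as the sum of tangential derivatives along an orthonormal tangent frame; with either convention your continuity claim for $H_{\de\Omega}$ is then justified.
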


The scalar function $H_{\de\Omega}:\de\Omega\rightarrow\mathbb{R}$  is the so-called mean curvature. 
\begin{rem}
	Using the definition of tangential divergence, the Gauss-Green theorem can be reformulated in the following way:
	$$ \int_{\de\Omega}{\rm div}^{\de\Omega}T(x)\;d\mathcal{H}^{n-1}(x)=\int_{\de\Omega}H_{\de\Omega}(x)\langle T(x),\nu_{\de\Omega}(x)\rangle\;d\mathcal{H}^{n-1}(x),$$
	for every $T\in C^1_c(\mathbb{R}^n;\mathbb{R}^n)$.
\end{rem}
A $1-$parameter family of diffeomorphisms of $\mathbb{R}^n$ is a smooth function 
$$ (x,t)\in\mathbb{R}^n\times(-\epsilon,\epsilon)\mapsto \phi(x,t),$$ 
for $\epsilon>0$ such that, for each fixed $|t|<\epsilon$, $\phi(\cdot,t)$ is a diffeomorphism. We consider here a particular class of $1-$parameter family of diffeomorphisms such that
$ \phi(x,t)=x+t T(x)+O(t^2)$, with $T\in C^1_c(\mathbb{R}^n;\mathbb{R}^n)$. In \cite{m}  (Theorem $17.5$) the following theorem is proved.
\begin{thm}
	Let	$\Omega$ be a  bounded, open set of $\mathbb{R}^n$ with $C^{\infty}$ boundary and let  $\{\phi(\cdot,t)\}_{|t|<\epsilon}$ be a $1-$parameter family of diffeomorphisms as previously defined. We denote by $\Omega(t)$ the image of $\Omega$ through $\phi(\cdot,t)$. Then,
	$$ P(\Omega(t))=P(\Omega)+t\int_{\de\Omega} {\rm div}^{\de\Omega}T(x)\;d\mathcal{H}^{n-1}(x)+o(t).$$
\end{thm}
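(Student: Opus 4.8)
The plan is to express the perimeter of the deformed set $\Omega(t)$ as an integral over the fixed surface $\de\Omega$ by means of the area formula, and then to perform a first-order Taylor expansion in $t$ of the resulting integrand. Since $\phi(\cdot,t)$ is a diffeomorphism of $\mathbb{R}^n$, its restriction to $\de\Omega$ is a diffeomorphism onto $\de\Omega(t)$, so the area formula gives
$$ P(\Omega(t)) = \haus(\de\Omega(t)) = \int_{\de\Omega} J^{\de\Omega}\phi(\cdot,t)(x)\,d\haus(x), $$
where $J^{\de\Omega}\phi(\cdot,t)$ denotes the tangential Jacobian of $\phi(\cdot,t)$ along $\de\Omega$, that is, the square root of the Gram determinant of the restriction of the spatial differential $\nabla\phi(\cdot,t)$ to the tangent space $T_x(\de\Omega)$.

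First I would fix $x\in\de\Omega$ together with an orthonormal basis $\{e_1,\dots,e_{n-1}\}$ of $T_x(\de\Omega)$, and write $\nabla\phi(x,t)=I+t\,\nabla T(x)+O(t^2)$, which follows at once from $\phi(x,t)=x+tT(x)+O(t^2)$. Setting $g_{ij}(t)=\langle \nabla\phi(x,t)e_i,\,\nabla\phi(x,t)e_j\rangle$, a direct expansion yields
$$ g_{ij}(t)=\delta_{ij}+t\big(\langle \nabla T(x)e_i,e_j\rangle+\langle \nabla T(x)e_j,e_i\rangle\big)+O(t^2). $$
Writing the Gram matrix as $G(t)=I+tA+O(t^2)$ with $A_{ij}=\langle \nabla T(x)e_i,e_j\rangle+\langle \nabla T(x)e_j,e_i\rangle$, I would apply the elementary expansion $\det\big(I+tA+O(t^2)\big)=1+t\,\mathrm{tr}(A)+O(t^2)$ together with $\mathrm{tr}(A)=2\sum_{i=1}^{n-1}\langle \nabla T(x)e_i,e_i\rangle$, obtaining
$$ J^{\de\Omega}\phi(x,t)=\sqrt{\det G(t)}=1+\tfrac{t}{2}\,\mathrm{tr}(A)+O(t^2)=1+t\sum_{i=1}^{n-1}\langle \nabla T(x)e_i,e_i\rangle+O(t^2). $$

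The remaining algebraic step is to identify the coefficient of $t$ with the tangential divergence. Completing $\{e_1,\dots,e_{n-1}\}$ to an orthonormal basis of $\mathbb{R}^n$ by adjoining $\nu_{\de\Omega}(x)$ and evaluating the full trace gives $\dive T=\sum_{i=1}^{n-1}\langle \nabla T\,e_i,e_i\rangle+\langle \nabla T\,\nu_{\de\Omega},\nu_{\de\Omega}\rangle$, whence $\sum_{i=1}^{n-1}\langle \nabla T\,e_i,e_i\rangle=\dive T-\langle \nabla T\,\nu_{\de\Omega},\nu_{\de\Omega}\rangle=\dive^{\de\Omega}T$ by the very definition of tangential divergence recalled above. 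Substituting back yields $J^{\de\Omega}\phi(x,t)=1+t\,\dive^{\de\Omega}T(x)+O(t^2)$, and integrating over $\de\Omega$ produces exactly the claimed expansion.

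The main point requiring care is the uniformity of the error term, so that integrating the pointwise $O(t^2)$ against $d\haus$ genuinely yields $o(t)$ and not merely a pointwise-negligible quantity. This is where the standing hypotheses enter: since $T\in C^1_c(\mathbb{R}^n;\mathbb{R}^n)$ and $\de\Omega$ is compact (because $\Omega$ is bounded with smooth boundary), both $\nabla T$ and its modulus of continuity are bounded on a neighbourhood of $\de\Omega$, so the remainder in the expansion of $J^{\de\Omega}\phi(x,t)$ is dominated by $C t^2$ uniformly in $x\in\de\Omega$; integrating over the finite-measure set $\de\Omega$ then contributes $O(t^2)=o(t)$. A secondary technical verification is that the area formula applies without multiplicity corrections, which is guaranteed by $\phi(\cdot,t)$ being a diffeomorphism for $|t|<\epsilon$.
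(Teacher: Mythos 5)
Your proof is correct. Note that the paper does not actually prove this statement --- it imports it from \cite{m} (Theorem 17.5) --- and your argument, the area formula applied to $\phi(\cdot,t)\vert_{\de\Omega}$ followed by the first-order expansion of the tangential Jacobian $J^{\de\Omega}\phi(x,t)=1+t\,{\rm div}^{\de\Omega}T(x)+O(t^2)$ (with the $O(t^2)$ uniform on the compact $\de\Omega$), is precisely the standard proof given in that reference.
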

Using now the Gauss-Green theorem and this last theorem, we obtain the following expression for the first variation of the perimeter of an open set with $C^{\infty} $ boundary:
$$\dfrac{d}{dt} P(\Omega(t))\arrowvert_{t=0}=\int_{\de\Omega}H_{\de\Omega}(x)\langle T(x),\nu_{\de\Omega}(x)\rangle \;d\mathcal{H}^{n-1}(x).$$
\subsection{ The first variation of anisotropic perimeter }
Let  $\Omega$ be a bounded open convex set of $\mathbb{R}^n$; in the following we are fixing a Finsler norm $F$. 

\begin{defn}
	Let $\Omega$ be a bounded open subset of $\R^n$ with Lipschitz boundary, the anisotropic perimeter of $\Omega$ is defined as 
	\[
	P_F(\Omega)=\displaystyle \int_{\de \Omega}F(\nu_{\de \Omega}(x)) \, d \mathcal H^{n-1}(x).
	\]
\end{defn}
Clearly, the anisotropic perimeter of  $\Omega$ is finite if and only if the usual Euclidean perimeter of $\Omega$, that we denote by $P(\Omega)$, is finite. Indeed, by the quoted properties of $F$ we obtain that
$$
aP(\Omega) \le P_F(\Omega) \le bP(\Omega).
$$

\noindent Moreover, an isoperimetric inequality is proved for the anisotropic perimeter, see for istance \cite{aflt,bu,dp,dpg,fm}.
\begin{thm}{\bf \cite[Proposition 2.3]{aflt}}
	Let $\Omega$ be a subset of $\mathbb{R}^n$ with finite perimeter. Then 
	$$
	P_F(\Omega) \ge n \kappa_n^\frac{1}{n} {\left\vert \Omega \right\vert}^{1-\frac{1}{n}} 
	$$
	and equality holds if and only if $\Omega$ is homothetic to a Wulff shape.
\end{thm}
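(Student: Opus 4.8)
The plan is to prove this sharp anisotropic isoperimetric inequality by mass transportation, since that single argument yields the optimal constant and identifies the equality cases, and it applies directly to sets of finite perimeter. By homogeneity under dilations, $P_F(\lambda\Omega)=\lambda^{n-1}P_F(\Omega)$ and $\abs{\lambda\Omega}=\lambda^n\abs{\Omega}$, so I may rescale and assume $\abs{\Omega}=\kappa_n=\abs{\W}$; the claim then reduces to $P_F(\Omega)\ge n\kappa_n=P_F(\W)$. First I would introduce the Brenier map $T=\nabla\psi\colon\Omega\to\W$, with $\psi$ convex, pushing the normalized Lebesgue measure on $\Omega$ onto that on $\W$. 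Since the two sets have equal volume, $T$ satisfies the Monge--Amp\`ere equation $\det DT=1$ a.e., and $DT$ is symmetric positive semidefinite, being the (Alexandrov) Hessian of a convex function. The arithmetic--geometric mean inequality applied to the nonnegative eigenvalues of $DT$ then gives
$$\dive T=\operatorname{tr}(DT)\ge n(\det DT)^{1/n}=n\qquad\text{a.e. in }\Omega.$$

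Integrating over $\Omega$ and applying the Gauss--Green theorem for sets of finite perimeter yields
$$n\,\abs{\Omega}\le\int_\Omega\dive T\,dx=\int_{\de^*\Omega}\langle T,\nu_{\de\Omega}\rangle\,d\haus.$$
Since $T(x)\in\overline{\W}$, we have $F^o(T(x))\le 1$, so the anisotropic Cauchy--Schwarz inequality gives $\langle T,\nu_{\de\Omega}\rangle\le F(\nu_{\de\Omega})F^o(T)\le F(\nu_{\de\Omega})$ pointwise on $\de^*\Omega$. Combining, $n\abs{\Omega}\le\int_{\de^*\Omega}F(\nu_{\de\Omega})\,d\haus=P_F(\Omega)$, which after undoing the normalization is precisely the asserted inequality.

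For the equality case I would retrace the two inequalities used. Equality in AM--GM forces all eigenvalues of $DT$ to coincide, and together with $\det DT=1$ this means $DT=\mathrm{Id}$ a.e.; hence $T$ is a.e. a translation, so $\Omega$ coincides up to a null set with a translate of $\W$, i.e. after rescaling $\Omega$ is homothetic to a Wulff shape (the tightness of the anisotropic Cauchy--Schwarz, namely $F^o(T)=1$ on the boundary, is then automatic).

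The main obstacle is analytic rather than geometric: justifying the integration of $\dive T$ and the divergence identity when $T$ is merely the gradient of a convex function (hence $BV$, with the Monge--Amp\`ere equation holding only in the Alexandrov/a.e.\ sense), together with making the equality discussion fully rigorous. This delicate point is exactly what the Figalli--Maggi--Pratelli refinement of the transport proof settles. For the convex sets that are the actual object of the present paper one may instead bypass these issues, writing $P_F(\Omega)=\int_{\de\Omega}h_{\W}(\nu_{\de\Omega})\,d\haus=n\,V_1(\Omega,\W)$ as a mixed volume (here $F=h_{\W}$ is the support function of the Wulff shape, since $\sup_{F^o(\xi)\le1}\langle\xi,\nu\rangle=F(\nu)$) and invoking Minkowski's first inequality $V_1(\Omega,\W)^n\ge V(\Omega)^{n-1}V(\W)$, whose equality case is precisely homothety.
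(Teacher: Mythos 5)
This statement is not proved in the paper at all: it is quoted verbatim from \cite{aflt} (Proposition 2.3 there), and the paper's only ``proof'' is that citation, the classical route going through Brunn--Minkowski/Wulff-theorem arguments as in \cite{aflt,bu,dp,fm}. Your proposal is therefore a genuinely different route: Gromov's mass-transportation argument with the Brenier map, which is a correct and standard proof of the anisotropic isoperimetric inequality. The reduction by scaling, the Monge--Amp\`ere equation $\det DT=1$, the arithmetic--geometric mean step $\operatorname{div} T\ge n$, and the anisotropic Cauchy--Schwarz estimate $\langle T,\nu_{\de\Omega}\rangle\le F(\nu_{\de\Omega})F^o(T)\le F(\nu_{\de\Omega})$ are all sound, and you correctly identify the genuinely delicate point: $T$ is only a $BV$ gradient of a convex function, so the Gauss--Green identity must be replaced by an inequality involving the distributional divergence (whose singular part has a sign by convexity), and the equality analysis cannot be waved through --- note that ``$DT=\mathrm{Id}$ a.e.\ implies $T$ is a translation'' is false for general $BV$ maps (Cantor-type examples), so the absence of a singular part must itself be extracted from the transport condition, which is exactly what the Figalli--Maggi--Pratelli treatment does. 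Since you flag these points and defer them to that reference, the proposal is acceptable as a proof outline rather than a self-contained proof. Your closing observation is also apt and arguably the most efficient route for this paper: since the paper only ever applies the theorem to bounded convex sets, writing $P_F(\Omega)=nV_1(\Omega,\mathcal{W})$ as a mixed volume and invoking Minkowski's first inequality (with its known equality case, see \cite{s}) gives the result immediately in the convex category, bypassing all the $BV$ technology; what the transport (or the cited \cite{aflt}) proof buys beyond that is validity for arbitrary sets of finite perimeter, which is the generality in which the statement is actually formulated.
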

We give now the following definitions. 
\begin{defn}
	Let  $\Omega$ be a subset of $\mathbb{R}^n$ with $C^{\infty}$ boundary. At each point of $\de \Omega$ we define 
	the $F$-normal vector $$\nu^F_{\de \Omega}(x)=\nabla F(\nu_{\de \Omega}(x)),$$
	sometimes called the Cahn-Hoffman field.
\end{defn}

\begin{defn}
	Let   $\Omega$ be a subset of $\mathbb{R}^n$ with $C^{\infty}$ boundary. For every $x\in\partial\Omega$, we define 
	the $F$-mean curvature $$H^F_{\de \Omega}(x)= {\rm div}^{\de\Omega}\left(\nu^F_{\de \Omega}(x)\right).$$
\end{defn}

In \cite[Theorem $3.6$]{brn}  we find the computation of the first variation of the anisotropic perimeter.  We report its statement; in the proof is used the first variation of the euclidean perimeter.
\begin{thm}
	Let $\Omega$ be a bounded open subset of $\R^n$ with $C^{\infty}$ boundary. For $t \in\mathbb{R}$, let $\phi(\cdot,t):\mathbb{R}^n\rightarrow\mathbb{R}^n$ be a family of diffeomorphisms such that $\phi(\cdot,0)=Id$ and $\phi(\cdot,t)-Id$ has compact support in $\mathbb{R}^n$, for $t$ in a neighborhood of $0$. Set $\Omega(t)$ the image of $\Omega$ through $\phi(\cdot,t)$.
	Then
	\begin{equation}\label{anis_per}
	\frac{d}{d t}P_{F}(\Omega(t)) \arrowvert_{t=0}=\int_{\de\Omega}H^F_{\de \Omega}(x)\langle\nu_{\de\Omega}(x),g(x)\rangle d\mathcal{H}^{n-1}(x),
	\end{equation}
	where $g(x):=\dfrac{\partial \phi(x,t)}{\partial t}\arrowvert_{t=0}.$
\end{thm}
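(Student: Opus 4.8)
The plan is to freeze the reference surface $\de\Omega$, pull the moving integral back onto it by the area formula, and then differentiate in $t$. Writing $\nu(x,t):=\nu_{\de\Omega(t)}(\phi(x,t))$ for the normal transported to the moving boundary and $J_t(x)$ for the tangential Jacobian of the restriction $\phi(\cdot,t)|_{\de\Omega}$, the area formula gives
\[
P_F(\Omega(t))=\int_{\de\Omega}F(\nu(x,t))\,J_t(x)\,d\haus(x).
\]
Since $\phi(\cdot,0)=\mathrm{Id}$ we have $J_0\equiv 1$ and $\nu(x,0)=\nu_{\de\Omega}(x)$, so differentiating under the integral sign at $t=0$ produces two contributions,
\[
\frac{d}{dt}P_F(\Omega(t))\Big|_{t=0}=\int_{\de\Omega}\Big[\langle\nabla F(\nu_{\de\Omega}),\dot\nu\rangle+F(\nu_{\de\Omega})\,{\rm div}^{\de\Omega}g\Big]\,d\haus,
\]
where $\dot\nu:=\partial_t\nu(x,t)|_{t=0}$. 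The second term is exactly where the first variation of the Euclidean perimeter enters: the identity $\tfrac{d}{dt}J_t|_{t=0}={\rm div}^{\de\Omega}g$ is precisely the content of the first variation formula recalled in the preliminaries.

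Next I would compute the variation of the unit normal. Differentiating the orthogonality relation $\langle\nu(x,t),D\phi(x,t)\tau\rangle=0$, which holds for every tangent vector $\tau$ to $\de\Omega$ at $x$, and using $\partial_t D\phi(x,t)|_{t=0}=Dg$, I find that $\dot\nu$ is tangential and equals minus the tangential projection of $(Dg)^{T}\nu_{\de\Omega}$. At this point I would invoke the standard fact that the first variation of a shape functional depends on $g$ only through its normal component, so that it suffices to treat normal velocities $g=\varphi\,\nu_{\de\Omega}$ with $\varphi\in C^1(\de\Omega)$: a tangential velocity slides $\de\Omega$ along itself and leaves $P_F(\Omega(t))$ unchanged to first order, while the derivative is linear in $g$. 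For such $g$ the formulas reduce to $\dot\nu=-\nabla^{\de\Omega}\varphi$ and ${\rm div}^{\de\Omega}g=\varphi\,H_{\de\Omega}$, with $H_{\de\Omega}$ the Euclidean mean curvature.

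Substituting these, using $\langle\nabla F(\nu_{\de\Omega}),\nu_{\de\Omega}\rangle=F(\nu_{\de\Omega})$ and the definition $\nu^F_{\de\Omega}=\nabla F(\nu_{\de\Omega})$, the derivative becomes
\[
\int_{\de\Omega}\Big[-\langle\nu^F_{\de\Omega},\nabla^{\de\Omega}\varphi\rangle+\varphi\,F(\nu_{\de\Omega})\,H_{\de\Omega}\Big]\,d\haus.
\]
I would then integrate the first term by parts on the closed hypersurface $\de\Omega$ through the Leibniz rule ${\rm div}^{\de\Omega}(\varphi\,\nu^F_{\de\Omega})=\langle\nabla^{\de\Omega}\varphi,\nu^F_{\de\Omega}\rangle+\varphi\,H^F_{\de\Omega}$ together with the Gauss--Green theorem on hypersurfaces, which converts $\int_{\de\Omega}{\rm div}^{\de\Omega}(\varphi\,\nu^F_{\de\Omega})\,d\haus$ into $\int_{\de\Omega}H_{\de\Omega}\,\varphi\,\langle\nu^F_{\de\Omega},\nu_{\de\Omega}\rangle\,d\haus=\int_{\de\Omega}H_{\de\Omega}\,\varphi\,F(\nu_{\de\Omega})\,d\haus$. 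The Leibniz rule contributes the surviving term $+\int_{\de\Omega}\varphi\,H^F_{\de\Omega}\,d\haus$, while the two Euclidean mean-curvature terms cancel identically, leaving exactly $\int_{\de\Omega}\varphi\,H^F_{\de\Omega}\,d\haus=\int_{\de\Omega}H^F_{\de\Omega}\langle\nu_{\de\Omega},g\rangle\,d\haus$, which is the claim.

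The main obstacle I expect lies in the middle step and is twofold: computing the variation $\dot\nu$ of the unit normal correctly, and justifying the reduction to normal velocities (the Hadamard-type principle that tangential motion does not contribute). Once these are in hand, the final integration by parts is what makes the spurious Euclidean mean-curvature terms cancel and isolates the anisotropic curvature $H^F_{\de\Omega}={\rm div}^{\de\Omega}(\nabla F(\nu_{\de\Omega}))$. The delicate bookkeeping of second fundamental form terms that would arise when commuting tangential derivatives with $\nu_{\de\Omega}$ is precisely what the normal-variation reduction is designed to circumvent.
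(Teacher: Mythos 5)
There is no internal proof to compare you against: the paper does not prove this theorem, it quotes it from \cite[Theorem 3.6]{brn}, remarking only that the proof uses the first variation of the Euclidean perimeter. Judged on its own, your argument is correct and is exactly in the spirit of that remark. The pull-back $P_F(\Omega(t))=\int_{\de\Omega}F(\nu(x,t))\,J_t(x)\,d\haus(x)$, the identification of $\dot\nu$ as minus the tangential projection of $(Dg)^{T}\nu_{\de\Omega}$ (which involves only tangential derivatives of $g$, hence is extension-independent), the reduction to $g=\varphi\nu_{\de\Omega}$, and the final integration by parts are all sound: applying the Gauss--Green theorem of the preliminaries to an extension of $T=\varphi\nu^F_{\de\Omega}$ (harmless, since ${\rm div}^{\de\Omega}T$ depends only on $T|_{\de\Omega}$) gives $\int_{\de\Omega}{\rm div}^{\de\Omega}(\varphi\nu^F_{\de\Omega})\,d\haus=\int_{\de\Omega}H_{\de\Omega}\,\varphi\,\langle\nu^F_{\de\Omega},\nu_{\de\Omega}\rangle\,d\haus=\int_{\de\Omega}H_{\de\Omega}\,\varphi\,F(\nu_{\de\Omega})\,d\haus$ by the homogeneity identity $\langle\nabla F(\xi),\xi\rangle=F(\xi)$, and this cancels the Jacobian contribution $\int_{\de\Omega}\varphi\,F(\nu_{\de\Omega})\,H_{\de\Omega}\,d\haus$, leaving $\int_{\de\Omega}\varphi\,H^F_{\de\Omega}\,d\haus=\int_{\de\Omega}H^F_{\de\Omega}\langle\nu_{\de\Omega},g\rangle\,d\haus$ as claimed. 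Two steps deserve to be stated more carefully than you do. First, differentiating under the integral sign requires the pointwise expansion $J_t=1+t\,{\rm div}^{\de\Omega}g+o(t)$, which is the computation \emph{inside} the proof of Theorem 17.5 of \cite{m}, not the integrated statement recalled in the paper's preliminaries; cite or prove it as such. Second, the Hadamard reduction to normal velocities is legitimate here precisely because your first formula is linear in $g$ and depends only on $g|_{\de\Omega}$, while a tangential field can be realized by a flow preserving $\de\Omega$ setwise, along which $P_F(\Omega(t))$ is constant; you do say this, but it is worth noting that the reduction can also be bypassed, since for tangential $g$ the two terms of your first formula cancel directly after the same integration by parts, using the symmetry of the Weingarten map. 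With these points made explicit, your argument is a complete, self-contained proof of \eqref{anis_per}.
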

For more details on this part  the reader is referred to \cite{x} and \cite{brn}.

\section{Isoperimetric inequality for  certain anisotropic functionals}

Let  $\Omega$ be a bounded, open set of $\mathbb{R}^n$ with Lipschitz boundary. Let $ p>1$,  we consider the following scale invariant functional:
$$\mathcal{F}(\Omega)=\dfrac{\ds\int_{\partial \Omega} [F^o(x) ]^p\;F(\nu_{\partial\Omega}(x))\;d\mathcal{H}^{n-1}(x) }{\left[\ds\int_{\partial \Omega}F(\nu_{\partial\Omega}(x))\;d\mathcal{H}^{n-1}(x) \right]V(\Omega)^{\frac{p}{n}}},$$
where $\nu_{\partial\Omega}(x) $ is the unit outer normal at  $x\in\partial\Omega$.
We define the anisotropic $p$-boundary momentum of $\Omega$ as
$$M_{F}(\Omega)=\int_{\partial \Omega} [F^o(x) ]^p\;F(\nu_{\partial\Omega}(x))\;d\mathcal{H}^{n-1}(x).$$
The main result of this article is the following.

\begin{thm}
	Let $\Omega$ be a  bounded, open, convex set of $\mathbb{R}^n$. The following inequality holds true:
	$$\mathcal{F}(\Omega)\geq \kappa_n^{-\frac{p}{n}} $$
	and   equality holds only for Wulff shapes centered at the origin.
\end{thm}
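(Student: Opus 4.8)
The plan is to flow $\Omega$ by the inverse anisotropic mean curvature flow and to prove that $\mathcal F$ is non-increasing along the flow, its value converging to a Wulff shape on which one can compute directly. First I would reduce to smooth sets: since $\Omega$ is convex it is approximable in Hausdorff distance by smooth, uniformly convex sets, and $M_F$, $P_F$ and $V$ are all continuous under such approximation, so it suffices to treat $\Omega$ with $C^\infty$ boundary. For such a set I would take $\{\de\Omega(t)\}_{t\ge 0}$ to be the inverse anisotropic mean curvature flow issuing from $\de\Omega$, i.e. the flow with velocity $g=\tfrac{1}{H^F_{\de\Omega}}\,\nabla F(\nu_{\de\Omega})$; by \cite{x,dpgx} it exists for all time, preserves convexity, and the sets dilated about the origin converge smoothly to a Wulff shape $\mathcal W_{r_\infty}(x_\infty)$.

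Next I would record the evolution of the three ingredients. Using $\langle\nabla F(\xi),\xi\rangle=F(\xi)$ one gets $\langle\nu_{\de\Omega},g\rangle=F(\nu_{\de\Omega})/H^F_{\de\Omega}$, so the first variation formula \eqref{anis_per} yields $\tfrac{d}{dt}P_F(\Omega(t))=\int_{\de\Omega}H^F_{\de\Omega}\langle\nu_{\de\Omega},g\rangle\,d\haus=P_F(\Omega(t))$, the exponential growth of the anisotropic perimeter, while $\tfrac{d}{dt}V(\Omega(t))=\int_{\de\Omega}\langle\nu_{\de\Omega},g\rangle\,d\haus=\int_{\de\Omega}\tfrac{F(\nu_{\de\Omega})}{H^F_{\de\Omega}}\,d\haus\ge\tfrac{n}{n-1}V(\Omega(t))$ by the anisotropic Heintze–Karcher inequality. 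The heart of the matter is the first variation of $M_F$: by the transport theorem on the moving hypersurface, the first variation of the anisotropic area element $F(\nu_{\de\Omega})\,d\haus$, and an integration by parts on $\de\Omega$ via the Gauss–Green theorem, the derivative of the weight $[F^o(x)]^p$ along the flow produces the factor $p\,[F^o(x)]^{p-1}\langle\nabla F^o(x),\nabla F(\nu_{\de\Omega})\rangle/H^F_{\de\Omega}$, and the anisotropic Cauchy–Schwarz inequality together with the normalizations $F(\nabla F^o)=F^o(\nabla F)=1$ bounds the cross term $\langle\nabla F^o(x),\nabla F(\nu_{\de\Omega})\rangle\le 1$.

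Assembling these into $\tfrac{d}{dt}\log\mathcal F=\tfrac{\dot M_F}{M_F}-\tfrac{\dot P_F}{P_F}-\tfrac pn\tfrac{\dot V}{V}$, the perimeter term contributes $-1$ and the volume term contributes at most $-\tfrac{p}{n-1}$, so monotonicity reduces to showing $\tfrac{\dot M_F}{M_F}\le 1+\tfrac{p}{n-1}$, i.e. to the single geometric inequality $\int_{\de\Omega}\tfrac{[F^o(x)]^{p-1}F(\nu_{\de\Omega})}{H^F_{\de\Omega}}\,d\haus\le\tfrac{1}{n-1}\int_{\de\Omega}[F^o(x)]^{p}F(\nu_{\de\Omega})\,d\haus$. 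I expect this to be the main obstacle: it is a weighted, Heintze–Karcher–type inequality in which convexity and the uniform convexity of $\mathcal W$ are essential (it holds with equality on Wulff shapes centered at the origin, which also forces $H^F_{\de\Omega}$ to be constant in the equality analysis), and controlling the sign of the momentum contribution together with the precise first-variation computation is the delicate step.

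Once $\tfrac{d}{dt}\mathcal F\le 0$ is established, scale invariance of $\mathcal F$ under dilations about the origin gives $\mathcal F(\Omega)\ge\lim_{t\to\infty}\mathcal F(\Omega(t))=\mathcal F(\mathcal W_{r_\infty}(x_\infty))$. To close the argument I would prove the elementary lemma that every Wulff shape satisfies $\mathcal F(\mathcal W_{r}(x_0))\ge\kappa_n^{-p/n}$, with equality only when $x_0=0$: after normalizing, this amounts to $\tfrac{1}{P_F(\mathcal W)}\int_{\de\mathcal W}[F^o(z+y)]^{p}\,d\sigma_y\ge\tfrac{1}{P_F(\mathcal W)}\int_{\de\mathcal W}[F^o(y)]^{p}\,d\sigma_y=1$, where $d\sigma_y=F(\nu_{\de\mathcal W})\,d\haus$; this follows from convexity of $\xi\mapsto[F^o(\xi)]^{p}$ combined with the symmetry $F(-\xi)=F(\xi)$ (so that the contributions of $z$ and $-z$ coincide and their average dominates the value at $y$), strict convexity giving equality only for $z=0$. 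Tracking the two equality cases — the Heintze–Karcher/Cauchy–Schwarz equalities forcing $\Omega$ to be a Wulff shape, and the final lemma forcing $x_\infty=0$ — then shows that equality in the theorem holds only for Wulff shapes centered at the origin.
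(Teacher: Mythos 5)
Your plan — run the inverse anisotropic mean curvature flow and show $\mathcal F$ is non-increasing along it — is a genuinely different route from the paper's, but it collapses at exactly the step you flag as ``the main obstacle'': the inequality you reduce monotonicity to is false. You need, for the \emph{arbitrary} (smooth, uniformly convex) initial body $\Omega$,
\[
\int_{\de\Omega}\frac{[F^o(x)]^{p-1}F(\nu_{\de\Omega}(x))}{H^F_{\de\Omega}(x)}\,d\haus(x)\;\le\;\frac{1}{n-1}\int_{\de\Omega}[F^o(x)]^{p}F(\nu_{\de\Omega}(x))\,d\haus(x),
\]
and no inequality of this shape can hold on the class of convex bodies: the right-hand side stays bounded whenever $\Omega$ is contained in a fixed Wulff shape, while the left-hand side carries the weight $1/H^F_{\de\Omega}$ and blows up as soon as a portion of $\de\Omega$ becomes nearly flat. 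Concretely, take $F=F^o=|\cdot|$, $n=2$, $p=2$, and let $\Omega_R$ be the unit disk truncated by the line $\{x_1=1/2\}$, with the resulting chord replaced by a circular arc of radius $R$ bulging slightly outward (corners smoothed). Then $\Omega_R$ is smooth, strictly convex, contained in the unit disk, so the right-hand side is at most $2\pi$; but on the nearly flat arc $|x|\ge 1/2$ and $1/\kappa=R$, so the left-hand side is at least $\tfrac{\sqrt3}{2}R\to\infty$. Note also that the Heintze--Karcher inequality, which you invoke as the model for this step, goes in the \emph{opposite} direction: it bounds $\int_{\de\Omega}F(\nu_{\de\Omega})/H^F_{\de\Omega}\,d\haus$ from below, never from above, and that asymmetry is unavoidable.

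This is not an artifact of your particular estimates (bounding $\langle\nabla F^o,\nu^F_{\de\Omega}\rangle\le 1$ and treating the volume term separately only loses more); it is the structural reason the paper does not argue by monotonicity at all. The paper's flow derivative \eqref{imp_2} contains the same uncontrollable weight $1/H^F_{\de\Omega}$; what is actually provable is the \emph{unweighted} sign condition \eqref{sec_ineq}, and multiplying a sign-indefinite integrand by an arbitrary positive weight destroys that sign. Accordingly, the paper proceeds in a completely different way: it first proves a minimizer of $\mathcal F$ exists (Blaschke selection plus a diameter bound along minimizing sequences); it uses the flow derivative \emph{only} when the excess $E_F(\Omega)<0$, because then the integrand in \eqref{imp_2} admits the pointwise negative upper bound $E_F(\Omega)$, so the weight $1/H^F_{\de\Omega}$ can only reinforce the negative sign; it rules out $E_F(\Omega)>0$ by an entirely non-flow, local argument (slicing off a small cap near $x^F_{\max}(\Omega)$ with a halfspace and comparing $\Delta M_F$, $\Delta P_F$, $\Delta V$); and it finally shows a zero-excess minimizer must be a Wulff shape centered at the origin. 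Your closing lemma on off-center Wulff shapes (via convexity of $[F^o]^p$ and the evenness of $F$) is correct, but it cannot rescue the argument without the monotonicity, which is exactly the part that fails. If you want to salvage a flow-based proof, you would have to confine the flow to the negative-excess regime and supply an independent mechanism for positive excess — which is precisely the trichotomy the paper implements.
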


\begin{rem}
	We observe that from this last theorem follows a particular case of \eqref{betta}. If we take $F(x)=|x|$, we obtain
	$$\left( \int_{\de\Omega}|x|^p \;d\haus(x) \right)^n \geq n^n \omega_n^{1-p} V(\Omega)^{n+p-1}. $$
\end{rem}

In what follows we will need the following definitions:
\begin{itemize}
	\item $r^{F}_{\max}(\Omega):=\max\left\{F^o(x)\;|\;x\in\bar{\Omega}\right\} $.
	\item  $x^F_{\max}(\Omega)\in\de\Omega$ is such that
	$F^o(x_{\max}^F(\Omega))=r^F_{\max}(\Omega)$;
	\item the anisotropic $p$-excess function $E_{F}(\Omega) :=(r^F_{\max}(\Omega))^{p-1}-\dfrac{M_{F}(\Omega)}{nV(\Omega)}.$
\end{itemize}
In order to prove our main theorem, we need some intermediate results that we are now going to illustrate. The general way of proceeding is analogous to the one presented in \cite{bfnt}.

\subsection{The first variation of the $p$ momentum in the smooth case}
Let $\Omega$ be a subset of $\mathbb{R}^n$ with $C^{\infty}$ boundary.
We  consider the following transformations:
\begin{equation}\label{change_variables}
\phi(x,t)=x+t\varphi(x)\nu^F_{\partial \Omega}(x),
\end{equation}
where $\phi\in C^{\infty}_c(\mathbb{R}^n)$ and  $\nu^F_{\partial\Omega}(x)=\nabla F(\nu_{\partial\Omega}(x))$ is the anisotropic normal.
We recall that 
$$ \Omega(t):=\{ x+t\varphi(x)\;\nu_{\partial\Omega}^F(x)\;|\;x\in \Omega\}.$$
From \eqref{anis_per}, we have that
\begin{align*}
&\frac{d}{d t}P_F(\Omega(t))\arrowvert_{t=0}=\int_{\partial \Omega}  H^F_{\partial\Omega}(x)\langle \nu_{\partial\Omega}(x),\varphi(x)\nu_{\partial\Omega}^F(x)\rangle\;d\mathcal{H}^{n-1}(x)=\\&=
\int_{\partial \Omega}  H^F_{\partial\Omega}(x)\varphi(x)\langle \nu_{\partial\Omega}(x),\nabla F(\nu_{\partial\Omega}(x))\rangle\;d\mathcal{H}^{n-1}(x)=\int_{\de\Omega}H^F_{\partial\Omega}(x)\varphi(x)F(\nu_{\de\Omega}(x))\;d\mathcal{H}^{n-1}(x),
\end{align*}
where the last equality holds true because of the properties of a Finsler norm. We  recall also the variation of the volume of a set:
$$\frac{d}{d t}V(\Omega(t))\arrowvert_{t=0}=\int_{\partial \Omega } \varphi(x) F(\nu_{\partial\Omega}(x))\;d\mathcal{H}^{n-1}(x).$$

\begin{prop}
	Let $\Omega$ and $\Omega(t)$ be the subsets of $\mathbb{R}^n$ previously defined. Then
	\begin{align*}
	&\frac{d}{d t}M_{F}(\Omega(t))\arrowvert_{t=0}=\\&=p\int_{\partial \Omega}\left(F^o(x)\right)^{p-1}\langle\nabla F^o(x), \varphi(x)\,\nu^F_{\partial\Omega}(x)\rangle F(\nu_{\partial\Omega} (x))\;d\mathcal{H}^{n-1}(x)+\\&+\int_{\partial\Omega  }[F^o(x) ]^p \,F(\nu_{\partial\Omega}(x))\;H^F_{\partial\Omega}(x)\varphi(x)\;d\mathcal{H}^{n-1}(x).
	\end{align*}
\end{prop}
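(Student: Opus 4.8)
The plan is to reduce the computation to the two first--variation formulas recorded just above, namely the first variation of the anisotropic perimeter \eqref{anis_per} and of the volume, by viewing $M_F$ as a \emph{weighted} anisotropic perimeter with the time--independent spatial weight $\psi(x)=[F^o(x)]^p$. Concretely, I would pull the surface integral over $\partial\Omega(t)$ back to the fixed boundary $\partial\Omega$ through the diffeomorphism $\phi(\cdot,t)$ of \eqref{change_variables}, writing
$$M_F(\Omega(t))=\int_{\partial\Omega}[F^o(\phi(x,t))]^p\,\mathcal A(x,t)\,d\haus(x),$$
where $\mathcal A(x,t)$ denotes the pulled--back anisotropic area density $F(\nu_{\partial\Omega(t)}(\phi(x,t)))\,J(x,t)$, with $J(x,t)$ the tangential Jacobian of $\phi(\cdot,t)$ along $\partial\Omega$, so that $\mathcal A(x,0)=F(\nu_{\partial\Omega}(x))$. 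Since $\partial\Omega$ is fixed and $\phi$ is smooth, I can differentiate under the integral sign at $t=0$ and apply the product rule, which splits the first variation into a ``weight'' term and an ``area'' term.

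The weight term is immediate from the chain rule: since $\partial_t\phi(x,t)|_{t=0}=\varphi(x)\,\nu^F_{\partial\Omega}(x)=:g(x)$, one gets $\partial_t[F^o(\phi(x,t))]^p|_{t=0}=p[F^o(x)]^{p-1}\langle\nabla F^o(x),g(x)\rangle$, and multiplying by $\mathcal A(x,0)=F(\nu_{\partial\Omega}(x))$ produces exactly the first term in the statement. The substance of the proof is therefore the area term $\int_{\partial\Omega}[F^o(x)]^p\,\partial_t\mathcal A(x,0)\,d\haus(x)$, for which I would establish the \emph{pointwise} identity $\partial_t\mathcal A(x,0)=H^F_{\partial\Omega}(x)\,\langle\nu_{\partial\Omega}(x),g(x)\rangle=H^F_{\partial\Omega}(x)\,\varphi(x)\,F(\nu_{\partial\Omega}(x))$, the second equality being the Euler relation $\langle\nu,\nabla F(\nu)\rangle=F(\nu)$. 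Substituting this and recalling $H^F_{\partial\Omega}=\mathrm{div}^{\partial\Omega}\nu^F_{\partial\Omega}$ then yields the second term.

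To prove the pointwise identity I would compute $\partial_t\mathcal A(x,0)=\langle\nu^F_{\partial\Omega},\dot\nu\rangle+F(\nu_{\partial\Omega})\,\mathrm{div}^{\partial\Omega}g$, where $\dot\nu$ is the variation of the unit normal; a standard computation gives $\dot\nu=-\nabla^{\partial\Omega}(\langle g,\nu_{\partial\Omega}\rangle)+\mathrm{II}(g^{\top})$, with $\mathrm{II}$ the second fundamental form and $g^{\top}$ the tangential part of $g$. Expanding $\mathrm{div}^{\partial\Omega}(\varphi\,\nu^F_{\partial\Omega})$ and collecting terms, the two contributions carrying $\nabla^{\partial\Omega}\varphi$ cancel, and what remains reduces to $H^F_{\partial\Omega}\,\varphi\,F(\nu_{\partial\Omega})$ precisely because of the identity $\nabla^{\partial\Omega}\!\big(F(\nu_{\partial\Omega})\big)=\mathrm{II}\big((\nu^F_{\partial\Omega})^{\top}\big)$, which follows by differentiating $F(\nu_{\partial\Omega})$ tangentially and using the symmetry of $\mathrm{II}$. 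This cancellation is the crux, and it is exactly the feature that singles out the Cahn--Hoffman (anisotropic--normal) flow $g=\varphi\,\nu^F_{\partial\Omega}$: for a generic flow $\partial_t\mathcal A(x,0)$ differs from $H^F_{\partial\Omega}\langle\nu_{\partial\Omega},g\rangle$ by a tangential divergence, harmless under $\int_{\partial\Omega}d\haus$ but no longer negligible once the non--constant weight $[F^o]^p$ is inserted.

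I expect the main obstacle to be precisely this area term. The global formula \eqref{anis_per} only delivers $\int_{\partial\Omega}\partial_t\mathcal A(x,0)\,d\haus=\int_{\partial\Omega}H^F_{\partial\Omega}\langle\nu_{\partial\Omega},g\rangle\,d\haus$, i.e.\ the unweighted statement, and against the non--constant weight $[F^o]^p$ one cannot simply integrate a boundaryless tangential divergence away. One must therefore upgrade \eqref{anis_per} to its pointwise (local) form for this specific flow, which is what the geometric identity $\nabla^{\partial\Omega}(F(\nu))=\mathrm{II}((\nu^F)^{\top})$ provides. Once the pointwise identity is in hand, assembling the two terms is routine.
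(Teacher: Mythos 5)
Your proposal is correct, and its skeleton is exactly the paper's: pull $M_F(\Omega(t))$ back to $\partial\Omega$ through \eqref{change_variables}, differentiate under the integral sign, and split by the product rule into a weight term (handled by the chain rule, giving the first integral) and an area term. The genuine difference is in how the area term is treated. The paper simply asserts the pointwise identity
\begin{equation*}
\frac{d}{dt}\left[F(\nu_{\partial\Omega}(\phi(x,t)))\, d\mathcal{H}^{n-1}(\phi(x,t))\right]\Big\arrowvert_{t=0} = H^F_{\partial\Omega}(x)\,\varphi(x)\,F(\nu_{\partial\Omega}(x)),
\end{equation*}
justifying it only by the words ``from the first variation of the perimeter \eqref{anis_per}'', whereas you actually prove it: you compute $\partial_t\mathcal{A}(x,0)=\langle\nu^F_{\partial\Omega},\dot\nu\rangle+F(\nu_{\partial\Omega})\,\mathrm{div}^{\partial\Omega}g$ with $\dot\nu=-\nabla^{\partial\Omega}\langle g,\nu_{\partial\Omega}\rangle+W(g^{\top})$ ($W$ the Weingarten map), and use the identity $\nabla^{\partial\Omega}\bigl(F(\nu_{\partial\Omega})\bigr)=W\bigl((\nu^F_{\partial\Omega})^{\top}\bigr)$ to cancel the $\nabla^{\partial\Omega}\varphi$ contributions; this computation is correct, and the cancellation does occur exactly as you claim. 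Your diagnosis of why this extra work is needed is also accurate: \eqref{anis_per} is an integrated statement, so by itself it determines the derivative of the anisotropic area element only up to a term with vanishing integral (a tangential divergence), which is harmless for $P_F$ but not once multiplied by the non-constant weight $[F^o]^p$; moreover the pointwise identity is special to the Cahn--Hoffman direction $g=\varphi\,\nu^F_{\partial\Omega}$ and fails for a generic flow. In effect, what the paper implicitly invokes is the local form of the first variation established in \cite{brn}, and your argument reproduces precisely that local statement. So: same decomposition, but the paper's proof is shorter and leans on a citation at its crucial step, while yours is self-contained, closes that gap, and makes explicit where the structure of the flow enters.
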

\begin{proof}
	Considering the change of variables given by \eqref{change_variables}, i.e. $y=\phi(x,t)$, we have that 
	\begin{align*}
	&\frac{d}{d t}M_{F}(\Omega(t))\arrowvert_{t=0}=\\&=\int_{\partial \Omega} \dfrac{d}{d t }\left(  \left[ F^o(\phi(x,t))  \right]^p \right)F(\nu_{\partial\Omega}(\phi(x,t)))\;d\mathcal{H}^{n-1}(\phi(x,t))\arrowvert_{t=0}+\\&+\int_{\partial \Omega} \left( F^o(\phi(x,t))\right)^p \frac{d}{dt}\left[F(\nu_{\partial\Omega}(\phi(x,t))) \;d\mathcal{H}^{n-1}(\phi(x,t))\right]\arrowvert_{t=0}.
	\end{align*}
	We observe that
	\begin{align*}
	&\int_{\partial \Omega}	\dfrac{d}{d t }\left(  \left[ F^o(\phi(x,t))  \right]^p \right)F(\nu_{\partial\Omega}(\phi(x,t)))\;d\mathcal{H}^{n-1}(\phi(x,t))\arrowvert_{t=0}\\&=\int_{\partial \Omega} p \left(F^o(\phi(x,t))\right)^{p-1} \langle\nabla F^o(\phi(x,t)) , \varphi(x)\nu^F_{\partial\Omega}(x)\rangle F(\nu_{\partial\Omega}(\phi(x,t))\;d\mathcal{H}^{n-1}(\phi(x,t))\arrowvert_{t=0}.
	\end{align*}
	
	Moreover, from the first variation of the perimeter \eqref{anis_per}, we can say that
	\begin{equation*}
	\frac{d}{dt}\left[F(\nu_{\partial\Omega}(\phi(x,t))) \;d\mathcal{H}^{n-1}(\phi(x,t))\right]\arrowvert_{t=0}=  H^F_{\partial\Omega}(x)\varphi(x)F(\nu_{\partial\Omega}(x)).
	\end{equation*}
	The thesis follows.

\end{proof}

Considering now the derivative of the quotient, we obtain
\begin{align*}
\frac{d}{d t}\mathcal{F}((\Omega(t))\arrowvert_{t=0}&=\\=\dfrac{1}{P_F(\Omega)^2 V(\Omega)^{\frac{p}{n}}}&\left[p\int_{\partial \Omega}\big[\left(F^o(x)\right)^{p-1}\langle\nabla F^o(x), \nu^F_{\partial\Omega}(x)\rangle F(\nu_{\partial\Omega}(x))\right.\\&\left. -\frac{M_{F}(\Omega)))}{n V(\Omega)}F(\nu_{\partial\Omega}(x))\big]\;\varphi(x)\;d\mathcal{H}^{n-1}(x)+\right.
\\&\left.+\int_{\partial\Omega  }\big[(F^o(x) )^p-\frac{M_{F}(\Omega)}{P_F(\Omega)}\big]\; H^F_{\partial\Omega}(x)\;F(\nu_{\partial\Omega}(x))\varphi(x)\;d\mathcal{H}^{n-1}(x)\right].
\end{align*}

Let be $T>0$;  we choose, as in \cite{xz},
$$\varphi(x)=\dfrac{1}{H^F_{\partial\Omega}(x)},$$
and we have that 
$$ \frac{\partial}{\partial t}\phi(x,t)=\dfrac{\nu^F_{\partial\Omega}(x)}{H^F_{\partial\Omega}(x)},$$
for every $t\in[0,T]$.
This one parameter family of diffeomorphisms gives rise to the so called  inverse anisotropic mean curvature flow (IAMCF). Concerning this family of flows,  local and global existence and uniqueness have been studied in \cite{xz,hu,r}.

\begin{rem}
	Let  $\Omega\subseteq\mathbb{R}^n$ be a bounded convex set of class $C^{\infty}$. 
	$\Omega$ is called $F$-mean convex if its anisotropic mean curvature is strictly positive and, in this case, we say that $\Omega\in C_F^{\infty,+}$. In \cite{xz} is proved that, if $\Omega(0)=\Omega\in C_F^{\infty,+}$, then there exists an unique smooth solution $\phi(\cdot,t)$ of the inverse mean curvature flow in $[0,+\infty]$.
	Moreover the surface $\partial\Omega(t)$, for every $t>0$, is the boundary of a smooth convex set in $C_F^{\infty,+}$ that asymptotically converges to a Wulff shape as $t\rightarrow +\infty$.
\end{rem}

Substituting this $\varphi$ in the derivative of the quotient and taking in account the fact that 
$$ 
\int_{\partial\Omega  }\left[(F^o(x) )^p-\frac{M_{F}(\Omega)}{P_F(\Omega)}\right] F(\nu_{\partial\Omega}(x))\;d\mathcal{H}^{n-1}(x)=0,
$$
we obtain 
\begin{align}\label{imp_2}
& \frac{d}{d t}\mathcal{F}((\Omega(t))\arrowvert_{t=0}=\\\nonumber&\dfrac{p}{P_F(\Omega)^2 V(\Omega)^{\frac{p}{n}}}\int_{\partial \Omega}\left[\left(F^o(x)\right)^{p-1}\langle\nabla F^o(x), \nu^F_{\partial\Omega}(x)\rangle F(\nu_{\partial\Omega}(x))-\frac{M_{F}(\Omega)}{n V(\Omega)} F(\nu_{\de\Omega}(x))\right]\dfrac{d\mathcal{H}^{n-1}(x)}{H^F_{\partial\Omega}(x)}=\\\nonumber&= \dfrac{p}{P_F(\Omega)^2 V(\Omega)^{\frac{p}{n}}}\int_{\partial \Omega}\left[\left(F^o(x)\right)^{p-1}\langle\nabla F^o(x), \nu^F_{\partial\Omega}(x)\rangle-\frac{M_{F}(\Omega)}{n V(\Omega)} \right]\dfrac{F(\nu_{\partial\Omega}(x))}{H^F_{\partial\Omega}(x)}\;d\mathcal{H}^{n-1}(x).
\end{align}

\subsection{Existence of minimizers (Step 1)}

\begin{prop}
	There exists a convex set minimizing $\mathcal{F}(\cdot)$.
\end{prop}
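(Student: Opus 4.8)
\emph{The plan is to apply the direct method of the calculus of variations.} Since $\mathcal F$ is invariant under the scaling $\Omega\mapsto\lambda\Omega$, I would first fix a minimizing sequence $(\Omega_k)$ of bounded open convex sets and, after rescaling, normalise it so that $V(\Omega_k)=\kappa_n$ for every $k$. The infimum $m:=\inf\mathcal F$ is finite: it is nonnegative because every factor in $\mathcal F$ is nonnegative, and $m\le\mathcal F(\W)=\kappa_n^{-p/n}$, since on $\de\W$ one has $F^o\equiv1$, whence $M_F(\W)=P_F(\W)$ while $V(\W)=\kappa_n$. Thus $\mathcal F(\Omega_k)\le C$ for all large $k$.

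The crux is compactness, for which I would invoke the Blaschke selection theorem: it produces a Hausdorff-convergent subsequence of convex bodies provided the $\Omega_k$ are uniformly bounded. Here the normalisation $V(\Omega_k)=\kappa_n$ already rules out degeneration of the limit to a lower-dimensional set, because volume is continuous along Hausdorff-convergent sequences of convex bodies, so a limit of bodies of volume $\kappa_n$ again has volume $\kappa_n$. It therefore remains to bound the anisotropic circumradius $r^F_{\max}(\Omega_k)$, that is, to confine the $\Omega_k$ to a fixed Wulff ball $\W_{R_0}(0)$. \textbf{This uniform bound is the main obstacle.} The difficulty is that $\mathcal F$ is \emph{not} translation invariant and the momentum penalty alone does not forbid thin, far-reaching protrusions: in dimension $n\ge3$ a spike of length $R$ and vanishing cross-section costs an arbitrarily small amount of both anisotropic perimeter and $p$-momentum, so a minimizing sequence need not be bounded a priori. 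I would overcome this by a truncation--comparison argument, replacing each $\Omega_k$ by $\Omega_k\cap\W_{R_0}(0)$ for a suitable large $R_0$: the part that is cut off carries an above-average $[F^o]^p$-density, its marginal ratio $\delta M_F/\delta P_F$ being of order $R_0^p$ and thus far exceeding the mean $M_F/P_F$, so removing it does not increase the quotient $M_F/P_F$, while the concomitant loss of volume is of lower order than the loss of perimeter and is therefore harmless. This reduction shows that the minimizing sequence may be taken with $\Omega_k\subseteq\W_{R_0}(0)$, so that Blaschke applies. As a quantitative backbone for such estimates I would use the anisotropic divergence identity $\dive\big(x\,[F^o(x)]^{p-1}\big)=(n+p-1)[F^o(x)]^{p-1}$, which follows from $\dive(x)=n$ and $\langle\nabla F^o(x),x\rangle=F^o(x)$; combined with the anisotropic Cauchy--Schwarz inequality $\langle x,\nu_{\de\Omega}\rangle\le F^o(x)\,F(\nu_{\de\Omega})$ it yields $M_F(\Omega)\ge(n+p-1)\int_{\Omega}[F^o(x)]^{p-1}\,dx$, relating the momentum to the bulk.

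Finally I would pass to the limit. Let $\Omega_*\subseteq\W_{R_0}(0)$ be the convex body delivered by Blaschke. By the continuity of volume, of the anisotropic perimeter $P_F$, and of the boundary momentum $M_F$ along Hausdorff-convergent sequences of nondegenerate convex bodies (the first two are classical in the Brunn--Minkowski theory, and for $M_F$ one uses that $[F^o]^p$ is continuous together with the convergence of the surface measures), one obtains $V(\Omega_*)=\kappa_n>0$, $P_F(\Omega_*)=\lim_k P_F(\Omega_k)$ and $M_F(\Omega_*)=\lim_k M_F(\Omega_k)$, hence $\mathcal F(\Omega_*)=\lim_k\mathcal F(\Omega_k)=m$. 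Therefore $\Omega_*$ is a convex set minimizing $\mathcal F$, which is the assertion. I expect this lower-semicontinuity step to be routine once the uniform bound is secured; the whole weight of the proof rests on the coercivity estimate that confines the minimizing sequence to a fixed Wulff ball.
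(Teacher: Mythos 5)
Your skeleton --- normalize to $V(\Omega_k)=\kappa_n$, note that $\inf\mathcal F\le\mathcal F(\W)=\kappa_n^{-p/n}$ forces $M_F(\Omega_k)/P_F(\Omega_k)\le 1+o(1)$, confine the sequence to a fixed Wulff shape, then apply Blaschke and pass to the limit by continuity --- is exactly the paper's, and your final limit passage is fine. The divergence is in the confinement step, where your argument has a genuine gap, on two counts. First, the premise motivating your detour is false for convex sets: a convex body of fixed volume cannot grow a spike of vanishing cross-section. If $\diam_F(\Omega_k)\to\infty$ while $V(\Omega_k)=\kappa_n$, convexity (Brunn--Minkowski concavity of the slice volumes, or a cone comparison) gives $P_F(\Omega_k)\gtrsim \kappa_n^{\frac{n-2}{n-1}}\diam(\Omega_k)^{\frac{1}{n-1}}\to\infty$, in every dimension $n\ge2$; this is precisely what the paper exploits. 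Since $P_F(\Omega_k\cap\mathcal W_2(0))\le P_F(\mathcal W_2(0))$ by monotonicity of $P_F$ on nested convex bodies, almost all of the perimeter then lies outside $\mathcal W_2(0)$, where $[F^o]^p\ge 2^p$, so $M_F(\Omega_k)/P_F(\Omega_k)\ge 2^p(1+o(1))$, contradicting $M_F(\Omega_k)/P_F(\Omega_k)\le 1+o(1)$. Hence the uniform bound holds for the \emph{original} minimizing sequence; no modification of the sequence is needed.

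Second, and independently, your truncation comparison is not closed at the point that actually matters. The claim that passing to $\Omega_k\cap\mathcal W_{R_0}(0)$ does not increase $M_F/P_F$ is correct and provable: splitting the boundary along $\de\mathcal W_{R_0}(0)$, write $M_F=A+B$, $P_F=a+b$, and let $c$, $R_0^pc$ be the $P_F$- and $M_F$-measures of the new cap; then $A\le R_0^p a$, $B\ge R_0^p b$ and $c\le b$ (monotonicity of $P_F$ again) give $(A+R_0^pc)(a+b)\le(A+B)(a+c)$. But $\mathcal F$ carries the factor $V^{p/n}$ \emph{multiplicatively} in the denominator, so truncation increases $\mathcal F$ by the factor $\left(V(\Omega_k)/V(\Omega_k\cap\mathcal W_{R_0}(0))\right)^{p/n}$; to preserve minimality you need $V(\Omega_k\setminus\mathcal W_{R_0}(0))\to0$ uniformly in $k$ as $R_0\to\infty$. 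Comparing the volume loss with the perimeter loss, as you do, is not the relevant estimate, and your divergence identity only bounds $\int_\Omega[F^o]^{p-1}\,dx$ by $M_F(\Omega_k)/(n+p-1)$, which is exactly the quantity that is uncontrolled a priori. Nothing in your argument excludes that a fixed fraction of the volume --- or, since $\mathcal F$ is not translation invariant, all of $\Omega_k$ --- sits outside every fixed Wulff ball, in which case truncation destroys minimality or produces the empty set. Ruling this out is again the concentration argument above (if $\Omega_k\cap\mathcal W_{R_0}(0)=\emptyset$ then $M_F/P_F\ge R_0^p$), i.e., the paper's proof; as written, your detour is circular.
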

\begin{proof}
	Given a convex set $\Omega$, we can take a minimizing sequence $(\Omega_i)_i$, having the same volume of $\Omega$. 
	By Blaschke selection Theorem in \cite[Theorem 1.8.7]{s}, it is enough to show that the  $\Omega_i$'s are all contained in the same Wulff. For the sake of simplicity, we suppose that $V(\Omega_i)=\kappa_n$ and, since any Wulff $\W$ with centered in the origin is such that $\mathcal{F}(\W)=\kappa_n^{-\frac{p}{n}}$, we have that
	\begin{equation*}
	\lim_{i\rightarrow +\infty}\mathcal F(\Omega_i) \le \kappa_n^{-\frac{p}{n}}, 
	\end{equation*}
	and consequently
	\begin{equation*}
	\lim_{i\rightarrow +\infty} \frac{M_{F}(\Omega_i)}{P_F(\Omega_i)} \le 1.
	\end{equation*}
	Arguing by contradiction, if we assume that $\lim_{i\rightarrow +\infty} \diam_F(\Omega_i)=+\infty$, from  convexity follows easily that $\lim_{i\rightarrow +\infty} P_F(\Omega_i) = +\infty$. Thereafter, if $\W_2$ is the Wulff of anisotropic radius $2$ centered at the origin, it is enough to observe that
	\begin{equation*}
	\lim_{i\rightarrow +\infty}
	\frac{\int_{\de\Omega_i\cap\mathcal{W}_2}F(\nu_{\de\Omega}(x))\;d\haus(x)}{\int_{\de\Omega_i\setminus\W_2} F(\nu_{\de\Omega}(x))\;d\haus(x)} = 0
	\end{equation*}
	and 
	\begin{equation*}
	\lim_{i\rightarrow +\infty} \frac{M_{F}(\Omega_i)}{P_F(\Omega_i)} \ge \lim_{i\rightarrow +\infty} \frac{2^p}{1+ \frac{\int_{\de\Omega_i\cap\mathcal{W}_2}F(\nu_{\de\Omega}(x))\;d\haus(x)}{\int_{\de\Omega_i\setminus\W_2} F(\nu_{\de\Omega}(x))\;d\haus(x)}} = 2^p ,
	\end{equation*}
	which gives a contradiction. So the diameters of the $\Omega_i$'s are equibounded. Moreover, arguing as before, we can show that $\Omega_i\cap\mathcal{W}_2\neq\emptyset$ definitely. Therefore we have the claim.  
\end{proof}

\subsection{A minimizer cannot have negative Excess (Step $2$)}
\begin{rem}
	There exist sets with negative anisotropic $p$-Excess. We prove this fact in dimension $2$ and for $p=2$. We consider the elliptic metric $$F(x,y)=\sqrt{\frac{x^2}{a^2}+\frac{y^2}{b^2}}; $$
	we know that its polar is this elliptic norm
	$$F^o(x,y)=\sqrt{a^2x^2+b^2y^2}. $$
	We consider now the following convex domain:
	$$R_{\epsilon}=\left\{(x,y)\in\mathbb{R}^2\; :\;|x|\leq \dfrac{1}{\epsilon},\, \;|y|\leq \epsilon\right\} .$$
	From the computations we obtain that $V(R_{\epsilon})=4$, $r^F_{\max}(R_{\epsilon})=a/\epsilon+O(\epsilon^3)$ and $M_F(R_\epsilon)=(4a^2/3b)(1/\epsilon^3)+4a/\epsilon+O(\epsilon)$.
\end{rem}

\begin{lem}
	Let $\Omega$ be a bounded, open convex set of $\mathbb{R}^n$. Then
	\begin{equation}\label{imp}
	\left(F^o(x)\right)^{p-1}\langle\nabla F^o(x), \nu^F_{\partial\Omega}(x)\rangle-\frac{M_{F}(\Omega)}{n V(\Omega)}\leq  E_F(\Omega).
	\end{equation}
\end{lem}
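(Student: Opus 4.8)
The plan is to exploit the fact that the two terms involving $M_F(\Omega)/(nV(\Omega))$ cancel. Recalling that $E_F(\Omega)=(r^F_{\max}(\Omega))^{p-1}-M_F(\Omega)/(nV(\Omega))$, the assertion \eqref{imp} is equivalent to the purely pointwise estimate
$$\left(F^o(x)\right)^{p-1}\langle\nabla F^o(x), \nu^F_{\partial\Omega}(x)\rangle\leq \left(r^F_{\max}(\Omega)\right)^{p-1},$$
to be verified for $\mathcal H^{n-1}$-almost every $x\in\partial\Omega$, that is wherever the outer normal $\nu_{\partial\Omega}(x)$ (hence $\nu^F_{\partial\Omega}(x)=\nabla F(\nu_{\partial\Omega}(x))$) is defined. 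In particular, no global feature of $\Omega$ is needed beyond the very definition of $r^F_{\max}(\Omega)$.

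First I would estimate the scalar product. Using the homogeneity identities of Section 2.2, namely $F(\nabla F^o(x))=1$ and $F^o(\nabla F(\nu_{\partial\Omega}(x)))=1$, together with the anisotropic Cauchy--Schwarz inequality $\langle\xi,\eta\rangle\le F(\xi)F^o(\eta)$ applied to $\xi=\nabla F^o(x)$ and $\eta=\nu^F_{\partial\Omega}(x)$, I obtain
$$\langle\nabla F^o(x),\nu^F_{\partial\Omega}(x)\rangle\le F(\nabla F^o(x))\,F^o(\nu^F_{\partial\Omega}(x))=1.$$
Then, since $x\in\overline\Omega$ gives $F^o(x)\le r^F_{\max}(\Omega)$, and since $t\mapsto t^{p-1}$ is nondecreasing on $[0,+\infty)$ because $p>1$, multiplying by the nonnegative factor $(F^o(x))^{p-1}$ yields
$$\left(F^o(x)\right)^{p-1}\langle\nabla F^o(x),\nu^F_{\partial\Omega}(x)\rangle\le\left(F^o(x)\right)^{p-1}\le\left(r^F_{\max}(\Omega)\right)^{p-1}.$$
Subtracting $M_F(\Omega)/(nV(\Omega))$ from both sides recovers \eqref{imp}.

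I do not anticipate a substantial obstacle: the whole content lies in pairing, in the Cauchy--Schwarz step, the vector $\nabla F^o(x)$ (whose $F$-norm equals $1$) against the Cahn--Hoffman field $\nu^F_{\partial\Omega}(x)$ (whose $F^o$-norm equals $1$), so that the product of the two norms is exactly $1$. The only delicate point is the behaviour at the origin, should $0\in\partial\Omega$: there $\nabla F^o$ is undefined, but the prefactor $(F^o(x))^{p-1}$ tends to $0$ (since $p>1$), so the left-hand side is controlled by $0\le(r^F_{\max}(\Omega))^{p-1}$ and the inequality persists by continuity. Finally, for the later equality discussion it is worth recording that both inequalities above are saturated precisely when $F^o(x)=r^F_{\max}(\Omega)$ and the Cauchy--Schwarz step is an equality, the latter forcing $\nabla F^o(x)$ to be parallel to $\nu_{\partial\Omega}(x)$, i.e. $\partial\Omega$ to be tangent at $x$ to the level set $\{F^o=r^F_{\max}(\Omega)\}$.
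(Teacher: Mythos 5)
Your proof is correct and follows essentially the same route as the paper's: the key step in both is the anisotropic Cauchy--Schwarz inequality applied to $\nabla F^o(x)$ and $\nu^F_{\partial\Omega}(x)=\nabla F(\nu_{\partial\Omega}(x))$, combined with the duality identities $F(\nabla F^o(x))=F^o(\nabla F(\nu_{\partial\Omega}(x)))=1$, which bounds the scalar product by $1$. The paper stops there, leaving implicit the remaining steps you spell out (multiplying by $(F^o(x))^{p-1}\ge 0$, using $F^o(x)\le r^F_{\max}(\Omega)$ with the monotonicity of $t\mapsto t^{p-1}$, and subtracting $M_F(\Omega)/(nV(\Omega))$), so your write-up is simply a more complete version of the same argument.
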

\begin{proof}
	We observe that $$\langle\nabla F^o(x), \nu^F_{\partial\Omega}(x)\rangle =\langle\nabla F^o(x),  \nabla F(\nu_{\partial\Omega}(x))\rangle  \leq F(\nabla F^o(x)) F^o(\nabla F(\nu_{\partial\Omega}(x)))=1,$$	
	for the properties of the Finsler norm $F$.
\end{proof}
We prove now a fact, that is an analougous of a property holding in the Euclidean case (see Remark $2$ in \cite{bfnt}).
\begin{rem}
	Let $\Omega$ be a bounded, open convex set of $\mathbb{R}^n$. Then
	\begin{equation}\label{sec_ineq}
	\int_{\partial \Omega}\Big[\left(F^o(x)\right)^{p-1}\langle\nabla F^o(x), \nu^F_{\partial\Omega}(x)\rangle-\frac{M_{F}(\Omega)}{n V(\Omega)} \Big]F(\nu_{\partial\Omega}(x))\;d\mathcal{H}^{n-1}(x) \le 0.
	\end{equation}
	
\end{rem}
\begin{proof}
	In order to prove \eqref{sec_ineq}, we observe that
	\begin{align*}
	&\int_{\partial \Omega}\Big[\left(F^o(x)\right)^{p-1}\langle\nabla F^o(x), \nu^F_{\partial\Omega}(x)\rangle F(\nu_{\partial\Omega}(x))-\frac{M_{F}(\Omega)}{n V(\Omega)} F(\nu_{\partial\Omega}(x))\Big]d\mathcal{H}^{n-1}(x)	=\\&\int_{\partial \Omega}\Big[\left(F^o(x)\right)^{p-1}\langle\nabla F^o(x), \nabla F(\nu_{\partial\Omega}(x))\rangle F(\nu_{\partial\Omega}(x))-\frac{M_{F}(\Omega)}{n V(\Omega)} F(\nu_{\partial\Omega}(x))\Big]d\mathcal{H}^{n-1}(x)   \\&
	\leq\int_{\partial \Omega}[\left(F^o(x)\right)^{p-1}F(\nu_{\partial\Omega}(x))]\;d\mathcal{H}^{n-1}(x)-\frac{M_{F}(\Omega)}{n V(\Omega)} P_F(\Omega)  \\&
	\leq\int_{\partial \Omega}[\left(F^o(x)\right)^{p-1}F(\nu_{\partial\Omega}(x))]\;d\mathcal{H}^{n-1}(x)-\frac{M_{F}(\Omega)P_F(\Omega) }{\int_{\partial \Omega} F^o(x)F(\nu_{\partial\Omega}(x))\;d\mathcal{H}^{n-1}(x)} 
	\end{align*}
	and the last inequality holds since 
	$$nV(\Omega)=\int_{\partial \Omega} \langle x,\nu_{\partial\Omega}(x)\rangle \;d\mathcal{H}^{n-1}(x)\leq \int_{\partial \Omega} F^o(x)F(\nu_{\partial\Omega}(x))\;d\mathcal{H}^{n-1}(x),$$
	for the properties of the Finsler norms.
	Using now  H\"{o}lder inequality, we obtain
	\begin{align*}
	&\int_{\partial \Omega}\left(F^o(x)\right)^{p-1}F(\nu_{\partial\Omega}(x))\;d\mathcal{H}^{n-1}(x)\\&\leq\left[\int_{\partial \Omega}\left[\left(F^o(x)\right)^{p-1}\right]^{\frac{p}{p-1}}F(\nu_{\partial\Omega}(x))\;d\mathcal{H}^{n-1}(x)\right]^{\frac{p-1}{p}}\; \left(P_F(\Omega)\right)^{\frac{1}{p}}\\&=\left[\int_{\partial \Omega}\left(F^o(x)\right)^{p}F(\nu_{\partial\Omega}(x))\;d\haus \right]^{\frac{p-1}{p}}\; \left(P_F(\Omega)\right)^{\frac{1}{p}}
	\end{align*}
	and
	\begin{equation*}
	\int_{\partial \Omega} F^o(x)F(\nu_{\partial\Omega}(x))\;d\mathcal{H}^{n-1}(x)\leq\left[\int_{\partial \Omega}\left(F^o(x)\right)^{p}F(\nu_{\partial\Omega}(x))\;d\mathcal{H}^{n-1}(x) \right]^{\frac{1}{p}}\; \left(P_F(\Omega)\right)^{\frac{p-1}{p}}.
	\end{equation*}
	Finally, from these last two inequalities follows that
	$$\left(\int_{\partial \Omega}[\left(F^o(x)\right)^{p-1}F(\nu_{\partial\Omega}(x))]\;d\mathcal{H}^{n-1}(x)\right)\left( \int_{\partial \Omega} F^o(x)F(\nu_{\partial\Omega}(x))\;d\mathcal{H}^{n-1}(x)\right)\leq M_{F}(\Omega)P_F(\Omega).$$
	
\end{proof}

We recall now this lemma  (see \cite{xz}), which will be used in the next proofs. This is the anisotropic version of the Heintze-Karcher inequality, whose proof in the Euclidean case can be found in \cite{r}.

\begin{lem}
	Let  $\Omega$ be a bounded, open convex set of $\mathbb{R}^n$, then
	\begin{equation*}\label{anis_mean_curvature}
	\int_{\partial \Omega} \dfrac{F(\nu_{\partial\Omega}(x))}{H_{\partial \Omega}^F(x)}\;d\mathcal{H}^{n-1}(x)\geq \int_{\partial \mathcal{W}} \dfrac{F(\nu_{\partial\mathcal{W}}(x))}{H_{\partial \mathcal{W}}^F(x)}\;d\mathcal{H}^{n-1}(x)
	\end{equation*}
	where $\mathcal{W}$ is a Wulff such that $V(\mathcal{W})=V(\Omega)$.
\end{lem}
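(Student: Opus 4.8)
The plan is to reduce the comparison inequality to the \emph{absolute} form of the anisotropic Heintze--Karcher inequality already quoted in the Introduction, namely
$$\int_{\de\Omega}\frac{F(\nu_{\de\Omega}(x))}{H^F_{\de\Omega}(x)}\,d\haus(x)\ge \frac{n}{n-1}\,V(\Omega),$$
and then to check that the right-hand side of the statement is exactly the equality value $\frac{n}{n-1}V(\Omega)$ realised on the Wulff shape. First I would apply the displayed inequality to $\Omega$ itself, which bounds the left-hand side of the claim from below by $\frac{n}{n-1}V(\Omega)$.

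The heart of the matter is the evaluation of the integral over $\de\W$, and for this the key fact is that the Wulff shape $\W_r=\{x\colon F^o(x)<r\}$ has \emph{constant} anisotropic mean curvature $H^F_{\de\W_r}\equiv(n-1)/r$. I would prove this by a direct computation using the duality relations of Section $2.2$. On $\de\W_r$ the outer Euclidean normal is $\nu_{\de\W_r}(x)=\nabla F^o(x)/|\nabla F^o(x)|$, so by the $0$-homogeneity of $\nabla F$ and the identity $F^o(x)\,\nabla F(\nabla F^o(x))=x$ the Cahn--Hoffman field reduces to
$$\nu^F_{\de\W_r}(x)=\nabla F(\nu_{\de\W_r}(x))=\nabla F(\nabla F^o(x))=\frac{x}{F^o(x)}=\frac{x}{r}.$$
Taking the tangential divergence of this field and using $\dive^{\de\W_r}(x)=\dive(x)-\langle\nu_{\de\W_r},\nu_{\de\W_r}\rangle=n-1$ gives $H^F_{\de\W_r}(x)=\frac1r\,\dive^{\de\W_r}(x)=(n-1)/r$, as claimed.

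Since $H^F_{\de\W}$ is constant, the integral factors, and with the scaling relations $P_F(\W_r)=n\kappa_n r^{n-1}$ and $V(\W_r)=\kappa_n r^n$ (the former from the equality case of the anisotropic isoperimetric inequality of Section $2.4$) I obtain, for the Wulff shape $\W$ of radius $r$ with $V(\W)=V(\Omega)$,
$$\int_{\de\W}\frac{F(\nu_{\de\W}(x))}{H^F_{\de\W}(x)}\,d\haus(x)=\frac{r}{n-1}\,P_F(\W)=\frac{r}{n-1}\,n\kappa_n r^{n-1}=\frac{n}{n-1}\,V(\W)=\frac{n}{n-1}\,V(\Omega).$$
Comparing with the lower bound from the first step yields the asserted inequality. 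The main obstacle is precisely the curvature computation: establishing rigorously that $\de\W_r$ has constant anisotropic mean curvature $(n-1)/r$, which rests on careful bookkeeping of the dual gradients $\nabla F,\nabla F^o$ and of the tangential divergence on $\de\W_r$. I remark that if one wished for a fully self-contained proof of the absolute inequality itself, rather than quoting it, the genuinely hard analytic step would be an anisotropic Reilly/Ros argument, solving the anisotropic torsion problem $\dive(F(\nabla u)\nabla F(\nabla u))=1$ in $\Omega$ with $u=0$ on $\de\Omega$ and integrating the associated Reilly identity; this heavier machinery is unnecessary here.
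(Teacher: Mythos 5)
Your proof is correct, but it is worth pointing out that the paper does not actually prove this lemma: it is quoted verbatim from \cite{xz}, with \cite{r} cited for the Euclidean case, and no argument is given. What you have done instead is reduce the comparison form stated in the lemma to the absolute form $\int_{\de\Omega}F(\nu_{\de\Omega})/H^F_{\de\Omega}\,d\haus\geq\tfrac{n}{n-1}V(\Omega)$, which the paper also quotes (in its introduction, from the same references), and then verify that the right-hand side of the lemma is exactly $\tfrac{n}{n-1}V(\W)$. Both of your ingredients check out against the identities of Section 2.2: on $\de\W_r=\{F^o=r\}$ the normal $\nu_{\de\W_r}$ is a positive multiple of $\nabla F^o(x)$, so by $0$-homogeneity of $\nabla F$ and the identity $F^o(x)\,\nabla F(\nabla F^o(x))=x$ one gets $\nu^F_{\de\W_r}(x)=x/r$, hence $H^F_{\de\W_r}=\tfrac{1}{r}\dive^{\de\W_r}(x)=(n-1)/r$; and $P_F(\W_r)=n\kappa_n r^{n-1}$ (your appeal to the equality case of the anisotropic isoperimetric inequality works, or more directly $F(\nu_{\de\W_r})=\langle x,\nu_{\de\W_r}\rangle/r$, so $P_F(\W_r)=nV(\W_r)/r$), giving $\int_{\de\W}F(\nu_{\de\W})/H^F_{\de\W}\,d\haus=\tfrac{n}{n-1}V(\W)=\tfrac{n}{n-1}V(\Omega)$. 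So your route buys an actual derivation, and makes explicit that the comparison and absolute forms are equivalent statements, whereas the paper offers only a citation; the cost, as you yourself note, is that the analytic core (the absolute Heintze--Karcher inequality, proved in \cite{xz} by a Reilly/ABP-type argument) is still imported from the literature. One caveat you share with the paper: $H^F_{\de\Omega}$ and the quoted absolute inequality require $\de\Omega$ smooth with $H^F_{\de\Omega}>0$, while the lemma is stated for arbitrary bounded open convex sets, so strictly speaking an approximation by sets in $C^{\infty,+}_F$ (as used elsewhere in the paper) is needed to interpret the left-hand side and pass to the limit; neither your argument nor the paper addresses this, so it does not count against you here.
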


\begin{prop}
	Let  $\Omega$ be a bounded, open convex set of $\mathbb{R}^n$ such that 
	\begin{equation*}
	E_F(\Omega)<0,
	\end{equation*}
	then $\Omega$ is not a minimizer of $\mathcal{F}(\cdot)$.
\end{prop}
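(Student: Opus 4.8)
The plan is to prove the statement in effective form: if $E_F(\Omega)<0$, I would produce a convex competitor with strictly smaller value of $\mathcal F$ by deforming $\Omega$ along the inverse anisotropic mean curvature flow (IAMCF). Since the first variation formula \eqref{anis_per} and the resulting expansion \eqref{imp_2} were derived for sets with smooth boundary, the first step is to reduce to the case in which $\de\Omega$ is smooth and $F$-mean convex. For a bounded convex body this is arranged by approximation with smooth, uniformly convex sets, for which $H^F_{\de\Omega}>0$ everywhere and the IAMCF exists globally by the Remark on global existence; one checks that $r^F_{\max}$, $V$ and $M_F$ are continuous under Hausdorff convergence of convex bodies, so the strict inequality $E_F<0$ is inherited by the approximants.

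On such a smooth $F$-mean convex $\Omega$ I would evaluate the sign of \eqref{imp_2}, which is exactly the first variation of $\mathcal F$ along the flow generated by the choice $\varphi=1/H^F_{\de\Omega}$. The previous Lemma supplies the pointwise bound
\[
\left(F^o(x)\right)^{p-1}\langle\nabla F^o(x),\nu^F_{\de\Omega}(x)\rangle-\frac{M_F(\Omega)}{nV(\Omega)}\le E_F(\Omega)<0
\]
for $\haus$-a.e.\ $x\in\de\Omega$. Because $\Omega$ is $F$-mean convex, the weight $F(\nu_{\de\Omega})/H^F_{\de\Omega}$ is strictly positive and bounded, so multiplying by it and integrating gives
\[
\frac{d}{dt}\mathcal{F}(\Omega(t))\big|_{t=0}\le\frac{p\,E_F(\Omega)}{P_F(\Omega)^2\,V(\Omega)^{\frac{p}{n}}}\int_{\de\Omega}\frac{F(\nu_{\de\Omega}(x))}{H^F_{\de\Omega}(x)}\,d\haus(x).
\]
By the anisotropic Heintze--Karcher inequality the last integral is at least $\tfrac{n}{n-1}V(\Omega)>0$, hence strictly positive and finite, and since $E_F(\Omega)<0$ the whole right-hand side is strictly negative.

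It then follows that $\mathcal{F}(\Omega(t))<\mathcal{F}(\Omega)$ for all sufficiently small $t>0$, so the flowed set is a strictly better competitor and $\Omega$ cannot be a minimizer; the general convex case is recovered by transferring this strict improvement back through the approximation. I expect the sign computation itself to be routine once \eqref{imp_2} and the previous Lemma are in hand; the genuine obstacle is the reduction to the smooth $F$-mean convex setting, namely justifying that the variation formula and the global IAMCF apply on the approximants and that the \emph{strict} negativity of the excess, a quantity built from the boundary integral $M_F$ and the maximal anisotropic radius $r^F_{\max}$, survives the smoothing in a quantitative way, so as to yield a set strictly beating $\Omega$ rather than merely its limit.
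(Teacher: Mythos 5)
Your first step---the smooth, $F$-mean convex case---is correct and coincides with the paper's own argument: the pointwise bound \eqref{imp} multiplied by the positive weight $F(\nu_{\de\Omega})/H^F_{\de\Omega}$ and inserted into \eqref{imp_2} gives
\[
\frac{d}{dt}\mathcal F(\Omega(t))\Big|_{t=0}\le \frac{p\,E_F(\Omega)}{P_F(\Omega)^2V(\Omega)^{\frac{p}{n}}}\int_{\de\Omega}\frac{F(\nu_{\de\Omega})}{H^F_{\de\Omega}}\,d\haus<0,
\]
and invoking Heintze--Karcher for a quantitative lower bound on the integral is a harmless (indeed useful) addition.

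The genuine gap is the final sentence, ``the general convex case is recovered by transferring this strict improvement back through the approximation,'' which you yourself flag as the obstacle but never resolve---and as stated it does not work. If $\Omega\notin C_F^{\infty,+}$ and $\Omega_k\in C_F^{\infty,+}$ approximate it, the flow gives, for each $k$, some $t_k>0$ with $\mathcal F(\Omega_k(t_k))<\mathcal F(\Omega_k)$; but $\mathcal F(\Omega_k)\to\mathcal F(\Omega)$ possibly from above, so if the improvements $\mathcal F(\Omega_k)-\mathcal F(\Omega_k(t_k))$ shrink to zero one can have $\mathcal F(\Omega_k(t_k))>\mathcal F(\Omega)$ for every $k$, and no contradiction with the minimality of $\Omega$ arises. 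A first variation computed only at $t=0$ cannot rule this out: one needs the derivative of $\mathcal F(\Omega_k(t))$ to stay uniformly negative on a time interval $[0,t_0]$ with $t_0$ and the bound independent of $k$, i.e.\ one must control how the excess $E_F(\Omega_k(t))$ itself evolves along the IAMCF. This is exactly the content of the paper's second step: arguing by contradiction, it records the Hadamard formulas $\frac{d}{dt}V(\Omega_k(t))=\int_{\de\Omega_k(t)}F(\nu)/H^F\,d\haus$ and $\frac{d}{dt}P_F(\Omega_k(t))=P_F(\Omega_k(t))$, and proves the key differential inequality \eqref{variation_radius}, $\frac{d}{dt}r^F_{\max}(\Omega_k(t))\le r^F_{\max}(\Omega_k(t))/(n-1)$ (using the Finsler identities $F(\nabla F^o)=F^o(\nabla F)=1$ and the curvature bound at the touching point), whence $r^F_{\max}(\Omega_k(t))\le r^F_{\max}(\Omega_k)e^{t/(n-1)}$; these uniform growth estimates, fed into the computations of \cite[Proposition 2.4]{bfnt}, keep the excess (hence the sign of the first variation) under control on a fixed time interval and produce the contradiction. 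Without this ingredient, or some substitute for it, your proof covers only the $C_F^{\infty,+}$ case.
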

\begin{proof}
	We firstly assume that $\Omega\in C_F^{\infty,+}$. Since $E_F(\Omega)\neq 0$, $\Omega$ is not a Wullf shape centered at the origin. Then, from \eqref{imp} and \eqref{imp_2},
	we have
	$$\mathcal{F}'(\Omega)\leq \dfrac{p}{P_F(\Omega)^2 V(\Omega)^{\frac{p}{n}}}E_F(\Omega)\int_{\partial \Omega}\dfrac{d\mathcal{H}^{n-1}(x)}{H^F_{\partial \Omega}(x)}<0.$$ 
	We suppose now that $\Omega\notin C_F^{\infty,+}$ and we assume by contradiction that $\Omega$ minimizes the functional $\mathcal{F}(\cdot)$. We can find a decreasing (in the sense of inclusion) sequence of sets $\left(\Omega_k\right)_{k\in\mathbb{N}}\subset C_F^{\infty,+}$ that converges to $\Omega$ in the Hausdorff sense. We have that 
	$$ \lim\limits_{k\rightarrow+\infty}V(\Omega_k)=V(\Omega);\qquad \lim\limits_{k\rightarrow+\infty}P_F(\Omega_k)=P_F(\Omega);$$
	$$  \lim\limits_{k\rightarrow+\infty}M_F(\Omega_k)=M_F(\Omega);\qquad \lim\limits_{k\rightarrow+\infty}r^F_{\max}(\Omega_k)=r^F_{\max}(\Omega).$$
	We now consider the IAMCF for every $\Omega_k$ and we denote by $\Omega_k(t)$, for $t\geq0$, the family generated in this way. We let $\Omega_k(0)=\Omega_k$. Using Hadamard formula (see \cite{hp}), we obtain:
	\begin{equation*}\label{variation_volume}
	\dfrac{d}{dt}V(\Omega_k(t))=\int_{\partial \Omega_k(t)}\dfrac{F(\nu_{\partial\Omega}(x))}{H^F_{\partial\Omega_k(t)}}d\mathcal{H}^{n-1}(x);
	\end{equation*}
	\begin{equation*}\label{variation_perimeter}
	\dfrac{d}{dt}P_F(\Omega_k(t))=P_F(\Omega_k(t)).
	\end{equation*}
	We have also that
	\begin{equation}\label{variation_radius}
	\dfrac{d}{dt} r^F_{\max}(\Omega_k(t))\leq\dfrac{r^F_{\max}(\Omega_k(t))}{n-1}.
	\end{equation}
	We prove now this last inequality.  
	From definition of $x^F_{\max}(\Omega(t))$  and \eqref{change_variables} in the IAMCF case, we have that
	$$r^F_{\max}(\Omega(t))=F^o(x^F_{\max}(\Omega(t)) );$$
	$$ x^F_{\max}(\Omega(t))=x^F_{\max}(\Omega)+\dfrac{t\nu^F_{\de\Omega}}{H^F_{\de\Omega}(x^F_{\max}(\Omega))}.$$
	Then
	\begin{align*}
	&\frac{d}{dt}r^F_{\max}(\Omega(t))=\frac{d}{dt} F^o(x^F_{\max}(\Omega(t)))=\langle\nabla F^o(x^F_{\max}(\Omega(t)),\dfrac{\nu^F_{\de\Omega}(x^F_{\max}(\Omega))}{H^F_{\de\Omega}(x^F_{\max}(\Omega))}\rangle\leq\\&\leq F(\nabla F^o(x^F_{\max}(\Omega(t)))) F^o(\nu^F_{\de\Omega}(x^F_{\max}(\Omega))) \frac{1}{H^F_{\de\Omega}(x^F_{\max}(\Omega))}=\\&= F(\nabla F^o(x^F_{\max}(\Omega(t)))) F^o(\nabla F(\nu_{\de\Omega}(x^F_{\max}(\Omega)))) \frac{1}{H^F_{\de\Omega}(x^F_{\max}(\Omega))}\leq \\&=\frac{1}{H_F(x^F_{\max}(\Omega))}=\dfrac{r^F_{\max}(\Omega)}{n-1},
	\end{align*}
	since $F$ is a Finsler norm and therefore it is true that $F(\nabla F^o(x))=F^o(\nabla F(x))=1$.
	We can then repeat this last inequality for every $\Omega_k$.
	From \eqref{variation_radius} follows that 
	$$ r^F_{\max}(\Omega_k(t))\leq r^F_{\max}(\Omega_k)e^{\frac{t}{(n-1)}},\;{\rm for}\;t>0.$$ Analogous computations to the ones reported in \cite[Proposition 2.4]{bfnt} lead to a contradiction with the minimality of $\Omega$ and therefore to the thesis.
\end{proof}

\subsection{A minimizer cannot have positive Excess.}
We start observing that there exist sets with positive excess.
\begin{rem}
	We consider the case $n=2$ and $p=2$. The norm that we take into consideration is
	$$F(x,y)=\sqrt{\frac{x^2}{a^2}+\frac{y^2}{b^2}}; $$
	and its polar is:
	$$F^o(x,y)=\sqrt{a^2x^2+b^2y^2}. $$
	We define $$\mathcal{E}_{\epsilon}=\{(x,y)\in\mathbb{R}^2\;|\;a^2(1-\epsilon)^2x^2+b^2(1+\epsilon)^2y^2\}.$$
	We have that $$r^F_{\max}(\mathcal{E}_\epsilon)=1+\epsilon+o(\epsilon)$$ and $$V(R_\epsilon)=\dfrac{\pi}{ab} (1+\epsilon^2+o(\epsilon)).$$
	Computing the second momentum, we find that
	$$M_{F}=\dfrac{2}{ab(1-\epsilon)^2(1+\epsilon)^2}\left(\pi+\epsilon\int_{0}^{\pi}\cos(2t)\;dt\right)+o(\epsilon)=\dfrac{2}{ab(1-\epsilon)^2(1+\epsilon)^2}\left(\pi+o(\epsilon)\right) $$
	and so it results that $E_{F}(\mathcal{E}_\epsilon)=\epsilon+o(\epsilon).$
\end{rem}

 In the following, we will use the notations: $\ubar{0}\in\mathbb{R}^{n-1}$ and $x'=(x_1,\dots, x_{n-1})$.
 \newline
 We consider the halfspace $T_{\epsilon}$ that has outer Euclidean normal pointing in the direction given by the outer Euclidean normal to $\Omega$ in the point $x^F_{\max}(\Omega)$ and intersecting $\Omega$ at a distance $\epsilon$ from $x^F_{\max}(\Omega)$.  We define the  sets:
 $$\Omega_{\epsilon}:=\Omega\cap T_{\epsilon},$$
 $$ A_{\epsilon}:=\partial \Omega_{\epsilon}\cap\partial T_{\epsilon},$$
 $$C_\epsilon=\de \Omega\cap T_\epsilon^c,$$
 where $T^c_{\epsilon}$ is the complement of $T_\epsilon$ in $\mathbb{R}^n$,  
 and we define the following quantitities, that vanish as $\epsilon$ goes to $0$:
 $$\Delta M_F:=M_F(\Omega_{\epsilon}) -M_F(\Omega);$$
 $$\Delta V:=V(\Omega_{\epsilon}) -V(\Omega);$$
 $$\Delta P_F:=P_F(\Omega_{\epsilon}) -P_F(\Omega).$$
 Considering Remark $2.2$ in \cite{dpgp}, we can choose the coordinate in such a way that the $x_n$ axis lies in the direction of the outer normal to $T_\epsilon$ and we denote the coordinates of $x^F_{\max}(\Omega)$ by   $x^F_{\max}(\Omega)=:(x'_0,y_0)\in\mathbb{R}^{n-1}\times \mathbb{R}$. Moreover, we call $A'_{\epsilon}\subseteq\mathbb{R}^{n-1}$ the projection of $A_\epsilon $ onto $\{x_n=0\}$.
 \noindent
 
 Let $g:A'_\epsilon\rightarrow \mathbb{R}$ the convave function describing $C_\epsilon$. Since the class of open and bounded convex set with positive mean curvature is dense in the class of open and bounded convex set, we can assume, in particular, that $\Omega$ is strictly convex and, consequently,  that $g$ is a function of class $C^1(A'_\epsilon)$, for $\epsilon>0$ small enough. Let $h:A'_\epsilon\rightarrow \mathbb{R}$ defined by $h(x')=g(x')-(y_0-\epsilon)$, so $h$ is equal to  $0$ on $\de A'_{\epsilon}$.
 \noindent
 
 We observe that $g:A'_{\epsilon}\rightarrow \mathbb{R}$ is such that for any $x=(x',x_n)\in C_\epsilon$ we have $x_n=g(x')$. 
 We call $G(x):=x_n-g(x')$ and, as a consequence, $C_\epsilon $ is the level set $G(x)=0$; the outer normal to $C_\epsilon$ in a point $x=(x',x_n)\in C_\epsilon$ is given by
 $\nabla G(x')/||\nabla G(x')||,$
 i.e. $$\nu_{C_\epsilon}(x)=\dfrac{(-\nabla g(x'),1)}{\sqrt{1+\nabla g(x')^2}} $$

 Since $\nabla g(x'_0)=\ubar{0}$, we have that 
 \begin{equation}
 -\Delta P_F=\int_{A'_\epsilon}\left[F(-\nabla g(x'),1)-F(\ubar{0},1)\right]d x'.
 \end{equation}
 
 \begin{lem}\label{equalzero}
 	We claim that 
 	\begin{equation*}
 	\int_{A'_{\epsilon}}\langle\nabla_{x'}F(\underline{0},1), -\nabla g(x')\rangle dx'=0
 	\end{equation*}
 \end{lem}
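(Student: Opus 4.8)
The plan is to exploit the fact that $\nabla_{x'}F(\underline{0},1)$ does not depend on the integration variable. Indeed, $F$ is fixed and $(\underline{0},1)\in\mathbb{R}^{n-1}\times\mathbb{R}$ is a single point, so $c:=\nabla_{x'}F(\underline{0},1)$ is a constant vector of $\mathbb{R}^{n-1}$ that can be pulled out of the integral. Thus
$$\int_{A'_{\epsilon}}\langle\nabla_{x'}F(\underline{0},1),-\nabla g(x')\rangle\,dx'=-\Big\langle c,\int_{A'_{\epsilon}}\nabla g(x')\,dx'\Big\rangle,$$
and it suffices to show that $\int_{A'_{\epsilon}}\nabla g(x')\,dx'=\underline{0}$.

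To this end I would use the auxiliary function $h(x')=g(x')-(y_0-\epsilon)$ introduced above, which satisfies $\nabla h=\nabla g$ and, as already observed, $h\equiv 0$ on $\partial A'_{\epsilon}$. Applying the divergence theorem on $A'_{\epsilon}\subseteq\mathbb{R}^{n-1}$ componentwise, for each $i=1,\dots,n-1$ one obtains
$$\int_{A'_{\epsilon}}\partial_{i}g(x')\,dx'=\int_{A'_{\epsilon}}\partial_{i}h(x')\,dx'=\int_{\partial A'_{\epsilon}}h(x')\,\nu_{i}(x')\,d\mathcal{H}^{n-2}(x')=0,$$
where $\nu=(\nu_{1},\dots,\nu_{n-1})$ denotes the outer unit normal to $\partial A'_{\epsilon}$. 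Hence $\int_{A'_{\epsilon}}\nabla g(x')\,dx'=\underline{0}$, and the claimed identity follows at once.

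The only point requiring care is the regularity needed to integrate by parts: one needs $g$ (equivalently $h$) to be $C^{1}$ up to $\partial A'_{\epsilon}$ and the boundary $\partial A'_{\epsilon}$ to be regular enough (e.g. Lipschitz) for the divergence theorem to apply. Both are guaranteed by the reduction already performed in the text, where $\Omega$ is taken strictly convex so that $g\in C^{1}(A'_{\epsilon})$, together with the fact that $C_{\epsilon}$ meets the cutting hyperplane $\partial T_{\epsilon}$ exactly along $A_{\epsilon}$, which forces $g=y_{0}-\epsilon$, i.e. $h=0$, on $\partial A'_{\epsilon}$. Granting these, there is no substantive obstacle and the statement is essentially a one-line consequence of the divergence theorem combined with the constancy of $\nabla_{x'}F(\underline{0},1)$.
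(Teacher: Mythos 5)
Your proof is correct and coincides with the paper's own argument: both pull out the constant vector $\nabla_{x'}F(\underline{0},1)$ and reduce the claim to $\int_{A'_\epsilon}\partial_i g\,dx'=0$, which is obtained by replacing $g$ with $h$ (same gradient) and applying the divergence theorem together with $h=0$ on $\partial A'_\epsilon$. No differences worth noting.
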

 \begin{proof}
 	Since 
 	\begin{align*}
 	\int_{A'_{\epsilon}}\langle\nabla_{x'}F(\underline{0},1), -\nabla g(x')\rangle dx'=-\sum_{i=1}^{n-1}\int_{A'_\epsilon}\dfrac{\de F}{\de x_i}(\ubar{0},1)\dfrac{\de g}{\de x_i}(x')\;dx',
 	\end{align*}
 	it is enough to prove that, for every $i=1,\dots (n-1)$, $$\int_{A'_\epsilon}\dfrac{\de F}{\de x_i}(\underline{0},1)\dfrac{\de g}{\de x_i}(x')\;dx'=\dfrac{\de F}{\de x_i}(\ubar{0},1)\int_{A'_\epsilon}\dfrac{\de g}{\de x_i}(x')\;dx'=0. $$
 	Using the divergence theorem and the fact that $h$ is equal to $0$ on  $\de A'_\epsilon $, 
 	\begin{equation*}
 	\int_{A'_\epsilon}	\dfrac{\de g}{\de x_i}(x')\;dx'=\int_{A'_\epsilon}	\dfrac{\de h}{\de x_i}(x')\;dx'=\int_{A'_\epsilon}{\rm div}\left(h(x')e_i\right)\;dx'=\int_{\de A'_\epsilon }\langle h(x')e_i, \nu_{\de A'_\epsilon}(x')\rangle d \mathcal{H}^{n-2}(x')=0,
 	\end{equation*}
 	where $e_i$  is the vector having all zero coordinates, except the $i$-coordinate equal to $1$.
 \end{proof}
 

 \begin{lem}
 	There exists a positive constant $C(\Omega)$ such that for all $\epsilon>0$ small enough, we have that
 	\begin{equation}\label{inequality}
 	|\Delta V|\leq C(\Omega) |\Delta P_F|.
 	\end{equation}
 	
 \end{lem}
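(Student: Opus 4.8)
The plan is to turn the geometric inequality into two quadratic estimates for the ``cap'' function, one powered by the convexity of $F$ and one by the convexity of $\Omega$. First I would rewrite both increments as integrals over $A'_\epsilon$. Since $\Omega_\epsilon\subset\Omega$ and the removed cap is $\{(x',x_n)\colon x'\in A'_\epsilon,\ y_0-\epsilon< x_n\le g(x')\}$, one has $|\Delta V|=\int_{A'_\epsilon}h(x')\,dx'$, where $h=g-(y_0-\epsilon)\ge0$ on $A'_\epsilon$ and $h=0$ on $\de A'_\epsilon$. For the anisotropic perimeter the cut replaces the graph $C_\epsilon$, carrying the weight $F(\nu_{\de\Omega})$, by the flat face $A_\epsilon$, carrying the constant weight $F(\ubar0,1)$; using the $1$-homogeneity of $F$ to absorb the area element $\sqrt{1+|\nabla g|^2}$ produces exactly the identity stated just before Lemma~\ref{equalzero}, namely $|\Delta P_F|=-\Delta P_F=\int_{A'_\epsilon}[F(-\nabla g(x'),1)-F(\ubar0,1)]\,dx'$, the sign being clear because intersecting a convex set with a halfspace decreases its anisotropic perimeter.

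Next I would remove the linear term. By Lemma~\ref{equalzero} the first order contribution $\int_{A'_\epsilon}\langle\nabla_{x'}F(\ubar0,1),-\nabla g\rangle\,dx'$ vanishes, so
\[
|\Delta P_F|=\int_{A'_\epsilon}\Big[F(-\nabla g,1)-F(\ubar0,1)-\langle\nabla_{x'}F(\ubar0,1),-\nabla g\rangle\Big]\,dx',
\]
and the integrand is precisely the second order Taylor remainder at the origin of the $C^2$ convex function $q\mapsto F(q,1)$ on $\R^{n-1}$. The uniform convexity of the Wulff shape $\mathcal W$ makes the tangential Hessian of $F$ at $(\ubar0,1)$ positive definite, so there are $\rho,\lambda>0$ with $F(q,1)-F(\ubar0,1)-\langle\nabla_{x'}F(\ubar0,1),q\rangle\ge\tfrac{\lambda}{2}|q|^2$ whenever $|q|\le\rho$. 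Since $g\in C^1$ and $\nabla g(x'_0)=\ubar0$ (the outer normal to $\Omega$ at $x^F_{\max}(\Omega)$ is $(\ubar0,1)$), $|\nabla g|$ is uniformly below $\rho$ on $A'_\epsilon$ once $\epsilon$ is small, so that $|\Delta P_F|\ge\tfrac{\lambda}{2}\int_{A'_\epsilon}|\nabla h|^2\,dx'$.

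It then remains to prove the reverse Poincar\'e type bound $\int_{A'_\epsilon}h\,dx'\le C\int_{A'_\epsilon}|\nabla h|^2\,dx'$. Testing the divergence theorem with the field $x'\mapsto x'-x'_0$ and using $h|_{\de A'_\epsilon}=0$ gives the exact identity $(n-1)\int_{A'_\epsilon}h\,dx'=\int_{A'_\epsilon}\langle\nabla h,x'_0-x'\rangle\,dx'$. Here I would use the uniform convexity of $\Omega$ near $x^F_{\max}(\Omega)$: reducing by density to a smooth, uniformly convex set, one has $-D^2h\ge c\,\mathrm{Id}$ on $A'_\epsilon$ for $\epsilon$ small, whence $\nabla h(x')=M(x'-x'_0)$ with $M=\int_0^1 D^2h(x'_0+s(x'-x'_0))\,ds$ and $-M\ge c\,\mathrm{Id}$, giving $|\nabla h(x')|\ge c\,|x'-x'_0|$ and therefore $\langle\nabla h,x'_0-x'\rangle\le|\nabla h|\,|x'-x'_0|\le c^{-1}|\nabla h|^2$ pointwise. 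Integrating and combining with the previous step yields
\[
|\Delta V|=\int_{A'_\epsilon}h\,dx'\le\frac{1}{c(n-1)}\int_{A'_\epsilon}|\nabla h|^2\,dx'\le\frac{2}{\lambda\,c\,(n-1)}\,|\Delta P_F|,
\]
which is the claim with $C(\Omega)=\tfrac{2}{\lambda c(n-1)}$.

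The crux is the pair of quadratic estimates, and in particular making them uniform as $\epsilon\to0$. Both genuinely use non-degenerate convexity: the lower bound on $|\Delta P_F|$ needs the positive definiteness of the tangential Hessian of $F$ (the uniform convexity of $\mathcal W$), while the reverse Poincar\'e bound needs a uniform lower curvature bound for $\de\Omega$ at the single point $x^F_{\max}(\Omega)$. Because $A'_\epsilon$ collapses to $x'_0$ as $\epsilon\to0$, the continuity of $\nabla g$ and of the second fundamental form confines both estimates to an arbitrarily small neighbourhood of $x'_0$; the delicate point is securing the $C^2$ regularity and the uniform convexity of $g$ needed for the Hessian bound, which is the reason for first reducing, by density, to smooth uniformly convex sets.
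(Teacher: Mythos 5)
Your treatment of $|\Delta P_F|$ coincides with the paper's: the identity $|\Delta P_F|=\int_{A'_\epsilon}[F(-\nabla g,1)-F(\ubar{0},1)]\,dx'$, the removal of the linear term via Lemma \ref{equalzero}, and the quadratic lower bound $|\Delta P_F|\ge\tfrac{\lambda}{2}\int_{A'_\epsilon}\|\nabla h\|^2\,dx'$ from the positive definiteness of the Hessian of $q\mapsto F(q,1)$ at $\ubar{0}$ (strictly, that positivity comes from the standing assumption that $[F^p]_{\xi\xi}$ is positive definite rather than from $\kappa_i(\mathcal{W})>c$, but it does hold under the paper's hypotheses). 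The genuine gap is in your last step. You justify the pointwise bound $\|\nabla h(x')\|\ge c\,|x'-x'_0|$ by asserting $-D^2h\ge c\,\mathrm{Id}$ on $A'_\epsilon$, ``reducing by density to a smooth, uniformly convex set''. But the lemma must be proved for an \emph{arbitrary} bounded open convex $\Omega$ --- it is applied to a putative minimizer, about which nothing beyond convexity is known --- and such a set admits no curvature lower bound near $x^F_{\max}(\Omega)$: for instance, in the plane with $F$ Euclidean, the convex hull of a small disc and the point $x^F_{\max}$ has two straight segments meeting at $x^F_{\max}$, so $D^2h\equiv0$ on $A'_\epsilon\setminus\{x'_0\}$ for every small $\epsilon$. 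The density reduction cannot close this hole: if $\Omega_k\to\Omega$ are smooth and uniformly convex with constants $c_k$, then $c_k\to0$ whenever $\Omega$ has a flat or ruled boundary point, your constant $C(\Omega)=2/(\lambda c(n-1))$ blows up along the sequence, and no inequality survives in the limit. This is exactly why the paper's approximation step only extracts qualitative information ($g\in C^1$) and takes all quantitative constants from $F$ and $r^F_{\max}(\Omega)$, never from curvature of $\Omega$.

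The missing idea is the device on which the paper's proof is built: the enclosing tangent body. There is a Wulff shape centered at the origin containing $\Omega$ and tangent to it at $x^F_{\max}(\Omega)$; by the uniform convexity of $\mathcal{W}$, Blaschke rolling then gives a ball $\bar{B}$ of radius $\bar{R}=r^F_{\max}(\Omega)/c$ with the same property. This yields $g(x')\le y_0-\tfrac{1}{2\bar{R}}|x'-x'_0|^2$, and your pointwise bound then follows from one-dimensional concavity along rays, with no regularity of $\Omega$ at all: setting $\psi(s)=g(x'_0+su)$, $|u|=1$, concavity gives $s\,\psi'(s)\le\psi(s)-\psi(0)\le-\tfrac{s^2}{2\bar{R}}$, i.e. $\|\nabla h(x')\|\ge|x'-x'_0|/(2\bar{R})$ a.e.\ on $A'_\epsilon$, with a constant depending only on $F$ and $r^F_{\max}(\Omega)$. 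With that substitution, your divergence-theorem identity $(n-1)\int_{A'_\epsilon}h\,dx'=\int_{A'_\epsilon}\langle\nabla h,x'_0-x'\rangle\,dx'$ closes the proof, and in fact gives a more elementary alternative to the paper's own argument, which uses $\bar{B}$ only through the diameter bound $\diam(A_\epsilon)\le2\sqrt{2\epsilon\bar{R}}$ and then combines the cone estimate $-\Delta V\ge\epsilon\,\mathcal{L}^{n-1}(A'_\epsilon)/n$ with a Sobolev--Poincar\'e inequality on $A'_\epsilon$. As written, however, your quadratic estimate for $\Delta V$ rests on a hypothesis that general convex sets do not satisfy, so the proof is incomplete.
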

 \begin{proof}
 	
 	There exists a Wulff shape centered in the origin, that we denote with $\mathcal{W}_{\max}$,  that contains $\Omega$ and  that  it is tangent to $\Omega$ in the point $x^F_{\max}=(x'_0,y_0)$, with $x'_0\in\mathbb{R}^{n-1}$ and $y_0\in\mathbb{R}$. Moreover, since $\mathcal{W}$ is uniformly convex, there exists a ball $\bar{B}$ that contains $\mathcal{W}_{\max}$ and that  is tangent to $\mathcal{W}_{\max}$ in $x^F_{\max}(\Omega)$. Let $c>0$ be the positive constant such that , for all $i=1,\cdots, n-1$, $\kappa_i(\mathcal{W})> c$, with $\kappa_i(\mathcal{W})$ principal curvature of $\mathcal{W}$.
 	If we denote by $\bar{R}$  the radius of $\bar{B}$, that is centered at a point $(x'_0,y_c)\in\mathbb{R}^{n-1}\times\mathbb{R}$, we have that $\bar{R}=r^F_{\max}(\Omega)/c$.

 	We have that $A_\epsilon\subseteq \bar{B}\cap \partial T_\epsilon$ and we  denote by $\tilde{R}$ the radius of the  $(n-1)$-dimensional ball $ \bar{B}\cap \partial T_\epsilon$ . Now, we have that
 	\begin{equation}\label{diam}
 	{\rm diam}(A_\epsilon)\leq {\rm diam}(\bar{B}\cap \partial T_\epsilon)=2\tilde{R}\leq 2 \sqrt{2\epsilon \bar{R}}.
 	\end{equation}
 	We observe that 
 	\begin{equation}\label{deltaV}
 	-\Delta V=\int_{A'_\epsilon} h(x')dx' \geq \epsilon \dfrac{ \mathcal{L}^{n-1}(A'_\epsilon)}{n}.
 	\end{equation}
 	Using \eqref{deltaV}, \eqref{diam} and the Sobolev Poincar\'{e} inequality 
 	\begin{align*}
 	&-\Delta V=\int_{A'_\epsilon} h(x')\;dx'\leq \left(\int_{A'_\epsilon} h(x')\;dx'\right)^2\dfrac{n}{\epsilon  \mathcal{L}^{n-1}(A'_\epsilon)}\leq\\&\leq C(n)\dfrac{\left(\mathcal{L}^{n-1}(A'_\epsilon)\right)^{2/(n-1)}}{\epsilon}\int_{A'_\epsilon}||\nabla h||^2 dx'\leq C(n) 2 \bar{R}\left(\omega_{n-1}\right)^{2/(n-1)}\int_{A'_\epsilon}||\nabla h||^2 dx'
 	\end{align*}
 	
 	We now consider the function, $x'\in\mathbb{R}^{n-1}\rightarrow F(x',1)$. Using the Taylor expansion with the Lagrange reminder:
 	\begin{align*}
 	& F(-\nabla g(x'),1)-F(\ubar{0},1)=\langle\nabla_{x'}F(\ubar{0},1),-\nabla g(x')\rangle+\frac{1}{2}(-\nabla g(x'))^TD^2F(\tilde{x_y},1)(-\nabla g(y))\geq\\&\geq \langle\nabla_{x'}F(\ubar{0},1),-\nabla g(x')\rangle+ c ||\nabla g(x')||^2.
 	\end{align*}
 	Integrating the last chain of inequalities and using  the result in Lemma \ref{equalzero}, we can conclude
 	\begin{align*}\label{delta_0}
 	-\Delta P_F\geq C(\Omega )\int_{A'_\epsilon}||\nabla g(x')||^2\;dx'.
 	\end{align*}
 	We point out that, with the last inequality, we have also  proved  that \linebreak $-\Delta P_F\geq 0$.
 \end{proof}

 \begin{lem}
 	Let $\Omega$ be a bounded, open convex set of $\mathbb{R}^n$, then 
 	\begin{equation}\label{deltaM}
 	\Delta W_{F}\leq p\left(r^F_ {\rm max}(\Omega)\right)^{p-1}\Delta V+\left(r^F_ {\rm max}(\Omega )\right)^p\Delta P_F+o(\Delta P_F)+o(\Delta V).
 	\end{equation}
 \end{lem}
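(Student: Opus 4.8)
The plan is to reduce $\Delta M_F$ to a single integral over the common projection $A'_\epsilon$, to isolate the two target terms $p(r^F_{\max}(\Omega))^{p-1}\Delta V$ and $(r^F_{\max}(\Omega))^p\Delta P_F$ by an elementary algebraic splitting, and then to absorb everything else into $o(\Delta V)+o(\Delta P_F)$ by Taylor expansion together with the quantitative estimates already established. Here $\Delta M_F=M_F(\Omega_\epsilon)-M_F(\Omega)$ is the boundary-momentum increment.

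First I would rewrite the increment over $A'_\epsilon$. Since $\de\Omega_\epsilon$ and $\de\Omega$ share the piece $\de\Omega\cap T_\epsilon$, only the flat cap $A_\epsilon$ and the original cap $C_\epsilon$ contribute. Writing $C_\epsilon$ as the graph $x_n=g(x')$ and $A_\epsilon$ as the flat piece at height $y_0-\epsilon$, and using $F(\nu)\,d\haus=F(-\nabla g(x'),1)\,dx'$ on $C_\epsilon$ and $F(\nu)\,d\haus=F(\ubar0,1)\,dx'$ on $A_\epsilon$, one obtains
\begin{equation*}
\Delta M_F=\int_{A'_\epsilon}\Big([F^o(x',y_0-\epsilon)]^p F(\ubar0,1)-[F^o(x',g(x'))]^p F(-\nabla g(x'),1)\Big)\,dx'.
\end{equation*}
Abbreviating $a=F^o(x',y_0-\epsilon)$, $c=F^o(x',g(x'))$, $F_A=F(\ubar0,1)$, $F_C=F(-\nabla g(x'),1)$ and $r=r^F_{\max}(\Omega)$, and recalling that $\int_{A'_\epsilon}(F_A-F_C)\,dx'=\Delta P_F$, I would split
\begin{equation*}
\Delta M_F=F_A\int_{A'_\epsilon}(a^p-c^p)\,dx'+r^p\,\Delta P_F+\int_{A'_\epsilon}(r^p-c^p)(F_C-F_A)\,dx'.
\end{equation*}

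The heart of the matter is the first integral. I would Taylor-expand $c^p$ about the height $y_0-\epsilon$ via the vertical displacement $h(x')=g(x')-(y_0-\epsilon)$, giving $a^p-c^p=-p\,a^{p-1}\,\partial_{x_n}F^o(x',y_0-\epsilon)\,h+O(h^2)$. The decisive geometric input is that $x^F_{\max}=(x'_0,y_0)$ maximizes $F^o$ on $\bar\Omega$ at a boundary point with Euclidean normal $(\ubar0,1)$, so the Lagrange condition forces $\nabla F^o(x^F_{\max})$ parallel to $(\ubar0,1)$; combined with $\langle\nabla F^o(\xi),\xi\rangle=F^o(\xi)$ this yields $\nabla F^o(x^F_{\max})=(r/y_0)(\ubar0,1)$, and the Finsler identity $F(\nabla F^o)=1$ then gives \emph{exactly} $F_A\cdot(r/y_0)=1$. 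Evaluating the coefficients $a^{p-1}\to r^{p-1}$ and $\partial_{x_n}F^o\to r/y_0$ at $x^F_{\max}$ (the oscillation over $A'_\epsilon$ being $O(\sqrt\epsilon)$ by \eqref{diam}) and using $\int_{A'_\epsilon}h\,dx'=-\Delta V$ from \eqref{deltaV}, the first integral equals $p\,r^{p-1}\Delta V+o(\Delta V)$, the constants $r/y_0$ and $F_A$ cancelling precisely.

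It remains to bound the two remainder contributions by $o(\Delta V)+o(\Delta P_F)$, and this I expect to be the main technical obstacle. For the Taylor remainder and the coefficient oscillations I would use $0\le h\le\epsilon$ with $\int h^2\le\epsilon\int h=\epsilon(-\Delta V)$ and the $O(\sqrt\epsilon)$ oscillation of $a^{p-1}\partial_{x_n}F^o$, both producing $o(\Delta V)$. For the last integral the crucial point is the \emph{sharp} estimate $r^p-c^p=O(\epsilon)$: although $C_\epsilon$ lies only within Euclidean distance $O(\sqrt\epsilon)$ of $x^F_{\max}$, the tangential part of $\nabla F^o$ vanishes there, so the first-order horizontal variation cancels and only the $O(\epsilon)$ vertical drop and the $O(\epsilon)$ quadratic horizontal term survive (here one compares $g$ with the graph of the circumscribed uniformly convex Wulff shape $\mathcal{W}_{\max}$). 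Combining $r^p-c^p=O(\epsilon)$ with $\int_{A'_\epsilon}|F_C-F_A|\le L\int_{A'_\epsilon}|\nabla g|\le L\,(\mathcal L^{n-1}(A'_\epsilon))^{1/2}\big(\int_{A'_\epsilon}|\nabla g|^2\big)^{1/2}$ and the convexity bound $\int_{A'_\epsilon}|\nabla g|^2\le C(\Omega)(-\Delta P_F)$ from the previous lemma shows this term is $o(\Delta V)$. Assembling the three pieces delivers $\Delta M_F\le p\,(r^F_{\max}(\Omega))^{p-1}\Delta V+(r^F_{\max}(\Omega))^p\Delta P_F+o(\Delta P_F)+o(\Delta V)$.
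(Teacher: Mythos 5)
Your proof is correct, and it follows the same overall strategy as the paper (reduce both caps to integrals over the common projection $A'_\epsilon$ via the graph representation, split off the two leading terms, absorb the rest using the preceding lemmas), but your decomposition is genuinely different and in one respect sharper. Writing $a=F^o(x',y_0-\epsilon)$, $c=F^o(x',g(x'))$, $F_A=F(\ubar{0},1)$, $F_C=F(-\nabla g(x'),1)$, $r=r^F_{\max}(\Omega)$, the paper splits $-\Delta M_F$ into $I_1=\int_{A'_\epsilon}(c^p-a^p)F_C\,dx'$ and $I_2=\int_{A'_\epsilon}(F_C-F_A)\,a^p\,dx'$, treats $I_1$ with the convexity inequality for $(F^o)^p$ and the one-sided bound $F_A\,F^o(x'_0,y_0)\ge y_0$, and treats $I_2$ by $a^p=r^p+o(1)$, then writes $o(1)\int(F_C-F_A)\,dx'=o(\Delta P_F)$. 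You instead weigh the cross term against $r^p$, so its coefficient is $r^p-c^p=O(\epsilon)$ (the tangential part of $\nabla F^o$ vanishes at $x^F_{\max}$, so the $O(\sqrt{\epsilon})$ horizontal displacement from \eqref{diam} enters only at second order), and you extract the exact identities $\nabla F^o(x^F_{\max})=(r/y_0)(\ubar{0},1)$ and $F_A\,r=y_0$; note the paper's assertion $\nabla F^o(x'_0,y_0)=(\ubar{0},1)$ is correct only up to the factor $1/F_A$, and your normalization is the right one, turning the paper's inequality into an exact cancellation (so you in fact get equality up to $o$-terms). Your handling of the cross term is also the more rigorous one: $O(\epsilon)\int_{A'_\epsilon}|F_C-F_A|\,dx'\le O(\epsilon)\,b\,(\mathcal{L}^{n-1}(A'_\epsilon))^{1/2}\bigl(\int_{A'_\epsilon}|\nabla g|^2\,dx'\bigr)^{1/2}$, and then \eqref{deltaV}, the bound $\int_{A'_\epsilon}|\nabla g|^2\,dx'\le C(\Omega)(-\Delta P_F)$, and \eqref{inequality} give $O(\sqrt{\epsilon})(-\Delta P_F)=o(\Delta P_F)$. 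By contrast, the paper's step $o(1)\int(F_C-F_A)\,dx'=o(\Delta P_F)$ implicitly replaces $\int|F_C-F_A|$ by $|\int(F_C-F_A)|$, which is automatic in the Euclidean case (where $F_C\ge F_A$ pointwise) but can fail for a general anisotropy since $F(v,1)\ge F(\ubar{0},1)$ need not hold; your observation that the weight is $O(\epsilon)$ (which applies equally to $a^p-r^p$) is exactly what repairs that step. Two cosmetic remarks: the cross term comes out as $o(\Delta P_F)$, not $o(\Delta V)$ as you wrote (the latter would need $-\Delta P_F\lesssim-\Delta V$, and \eqref{inequality} goes the other way), but the statement \eqref{deltaM} allows both; and your vertical Taylor expansion with $O(h^2)$ remainder uses $F^o\in C^2$ near $x^F_{\max}$, which follows from the standing assumptions on $F$ by duality, or can be avoided entirely by using the one-sided convexity inequality as the paper does, since only an upper bound on $a^p-c^p$ is needed.
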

 \begin{proof}
 	\begin{align*}
 	&	-\Delta W_F(\Omega)=\int_{C_\epsilon}\left( F^o(x)\right)^p F(\nu_{\de\Omega}(x))\;d\mathcal{H}^{n-1}(x)-\int_{A_{\epsilon}}\left(F^o(x)\right)^pF(\ubar{0},1)\;d\mathcal{H}^{n-1}(x)=\\&=\int_{A'_\epsilon}\left( F^o(x',g(x'))  \right)^p F(-\nabla g(x'),1)\;dt-F(\ubar{0},1) \int_{A'_\epsilon}\left(F^o\left(x',y_0-\epsilon\right)\right)^p\;dx'=\\&
 	=\int_{A'_\epsilon} \left[\left(   F^o(x',g(x'))\right)^p-\left( F^o(x', y_0-\epsilon) \right)^p \right] F(-\nabla g(x'),1)\;dx'+\\&+\int_{A'_\epsilon}\left[ F(-\nabla g(x'),1)-F(\ubar{0},1)  \right] \left(F^o(x',y_0-\epsilon)\right)^p\;dx'=I_1+I_2.
 	\end{align*}
 	Firstly, we take into consideration $I_2$.

 	\underline{Claim $1$}: $$F^o(x',y_0-\epsilon)=r^F_{\max}(\Omega)+o(1),$$ where we use the following notation: $q(\epsilon)=:o(\epsilon^n)$ if $\lim_{\epsilon\rightarrow0}q(\epsilon)/\epsilon^n=0.$
 	
 	Using Taylor 
 	\begin{align*}
 	& F^o(x', y_0-\epsilon)=F^o(x'_0,y_0)+\langle\nabla F^o(x'_0,y_0), (x'-x'_0,-\epsilon)\rangle+o(||(x'-x'_0,-\epsilon)||)=\\ & =r^F_{\max}(\Omega)+\langle\nabla F^o(x'_0,y_0), (x'-x'_0,-\epsilon)\rangle+o(||(x'-x'_0,-\epsilon)||).
 	\end{align*}
 	For the Cauchy-Schwartz inequality:
 	\begin{align*}
 	&	|\langle \nabla F^o(x'_0,y_0), (x'-x_0,-\epsilon)\rangle|\leq ||\nabla F^o(x'_0,y_0)||\sqrt{||x'-x_0||^2+\epsilon^2}\leq\\& \leq||\nabla F^o(x'_0,y_0)||\sqrt{\max_{x'\in A'_\epsilon}\{||x'-x'_0||\}+\epsilon^2}=o(1).
 	\end{align*}
 	So we have the claim.
 	
 	Using Claim $1$,  we have that 
 	\begin{align*}
 	& I_2=\int_{A'_{\epsilon}}\left[ F(-\nabla g(x'),1)-F(\ubar{0},1)  \right] \left( r^F_{\max}(\Omega)+o(1) \right)^p\;dy=\\&=\int_{A'_{\epsilon}}\left[ F(-\nabla g(x'),1)-F(\ubar{0},1)  \right] \left(	\left( r^F_{\max}(\Omega)\right)^p+o(1) \right)\;dy=\left(r^F_{\max}(\Omega)\right)^p\Delta P_F(\Omega)+o(1)\Delta P_F=\\&=\left(r^F_{\max}(\Omega)\right)^p\Delta P_F(\Omega)+o(\Delta P_F).
 	\end{align*}
 	
 	We study now $I_1$.
 	
 	From the convexity inequality we have 
 	\begin{equation*}
 	\left( F^o(x',g(x'))\right)^p- \left(F^o(x',y_0-\epsilon)\right)^p\geq p\left(F^o(x',y_0-\epsilon)\right)^{p-1}\langle\nabla  F^o(x',y_0-\epsilon),\left(\ubar{0},h(x')\right)\rangle.
 	\end{equation*}
 	Using the last convexity inequality we have
 	\begin{align*}
 	& I_1=\int_{A'_\epsilon} \left[\left(   F^o(x',g(x'))\right)^p-\left( F^o(x', y_0-\epsilon) \right)^p \right] F(-\nabla g(x'),1)\;dx'\geq\\ &\geq\int_{A'_{\epsilon}}p \left(F^o(x', y_0-\epsilon) \right)^{p-1}\langle\nabla F^o(x',y_0-\epsilon),(0,h(x'))\rangle F(-\nabla g(x'),1)\;dy=\\&=\int_{A'_{\epsilon}}p\left(F^o(x',y_0-\epsilon)\right)^{p-1}\dfrac{\de F^o}{\de x_n}(x',y_0-\epsilon)  h(x') F(-\nabla g(x'),1)\;dx'.
 	\end{align*}

 	\underline{Claim $2$} 	\begin{equation}
 	\dfrac{\de F^o}{\de x_n}(x', y_0-\epsilon)=\dfrac{F^o(x'_0,y_0)}{(y_0-\epsilon)}+o(1).
 	\end{equation}
 	
 	Using Taylor and the property $\langle\nabla F^o(\xi),\xi\rangle=F^o(\xi)$, we have that 
 	\begin{align*}
 	&  F^o(x'_0,y_0-\epsilon)=F^o(x',y_0-\epsilon)+o(1)=\langle\nabla F^o(x', y_0-\epsilon), (x',y_0-\epsilon)\rangle+o(1)=\\&=\langle\nabla_{x'}F^o(x', y_0-\epsilon),x'\rangle+\left(y_0-\epsilon\right)\dfrac{\de F^o}{\de x_n}(x',y_0-\epsilon)+o(1), 
 	\end{align*}
 	and consequently
 	\begin{equation}\label{remtre}
 	\dfrac{\de F^o}{\de x_n}(y, y_0-\epsilon)=\dfrac{F^o(x'_0,y_0-\epsilon)}{(y_0-\epsilon)}-\dfrac{1}{(y_0-\epsilon)}\langle\nabla_{x'}F^o(x', y_0-\epsilon),x'\rangle+o(1).
 	\end{equation}
 	Considering the fact that  $\nabla F^o(x'_0,y_0)=(\ubar{0},1)$, we have that 
 	\begin{align*}
 	\langle\nabla_{x'}F^o(x', y_0-\epsilon),x'\rangle=\sum_{i=1}^{n-1}x_i\dfrac{\de F^o}{\de x_i}(x',y_0-\epsilon)=\sum_{i=1}^{n-1}x_i\left(\dfrac{\de F^o}{\de x_i}(x'_0,y_0)+o(1)\right)=o(1).
 	\end{align*}
 	So, from \eqref{remtre} and Claim $1$, we obtain the claim
 	\begin{equation*}
 	\dfrac{\de F^o}{\de x_n}(x', y_0-\epsilon)=\dfrac{F^o(x'_0,y_0)}{(y_0-\epsilon)}+o(1).
 	\end{equation*}

 	\underline{Claim $3$} \begin{equation*}
 	F(-\nabla g(x'),1)=F(\ubar{0},1)+o(1).
 	\end{equation*}
 	Using Taylor and the facts that $\nabla g=\nabla h$ is continuous and $\nabla h(x'_0)=0$, we obtain
 	\begin{align*}
 	F(-\nabla h(x'),1)=F(\ubar{0},1)+\langle\nabla F(\ubar{0},1), (-\nabla h(x'),0)\rangle+o(||-\nabla h(x')||)=F(\ubar{0},1)+o(1),
 	\end{align*}

 	Using Claim $1$, Claim $2$ and Claim $3$:
 	\begin{align*}
 	&	I_1\geq \int_{A'_\epsilon}p \left(r^F_{\max}(\Omega)+o(1)\right)^{p-1}\left( \dfrac{F^o(x'_0,y_0)}{(y_0-\epsilon)}+o(1)\right) h(x')\left(F(\ubar{0},1)+o(1)\right)\;dy\\&=\int_{A'_\epsilon}p \left(r^F_{\max}(\Omega)\right)^{p-1}\dfrac{F^o(x'_0,y_0)}{(y_0-\epsilon)} h(y)F(\ubar{0},1)\;dy+o(-\Delta V)\geq \\&\geq  \int_{A'_\epsilon}p \left(r^F_{\max}(\Omega)\right)^{p-1}\dfrac{y_0}{y_0-\epsilon} h(x')\;dx'+o(-\Delta V)\geq p \left(r^F_{\max}(\Omega)\right)^{p-1}(-\Delta V)+o(-\Delta V),
 	\end{align*}
 	where we have used the fact that 
 	\begin{equation*}
 	F(\ubar{0},1) F^o(x'_0,y_0)\geq |\langle(\ubar{0},1), (x'_0,y_0)\rangle|=y_0.
 	\end{equation*}
 \end{proof}

\begin{lem}\label{new_lemma}
	Let $\Omega$ be a bounded, open convex set of $\mathbb{R}^n$. Then,
	\begin{equation}\label{dis_new}
	\dfrac{M_F(\Omega)}{P_F(\Omega)}\leq \left(r^F_{\max}(\Omega)\right)^p
	\end{equation}
	and equality holds if and only if $\Omega$ is a Wulff shape centered at the origin.
\end{lem}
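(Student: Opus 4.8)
The plan is to prove \eqref{dis_new} by a one–line pointwise estimate and then to spend the real effort on the equality case. First I would bound the integrand of the momentum: since $\de\Omega\subseteq\bar\Omega$, the very definition of $r^F_{\max}(\Omega)$ gives $F^o(x)\le r^F_{\max}(\Omega)$ for every $x\in\de\Omega$, hence $[F^o(x)]^p\le\left(r^F_{\max}(\Omega)\right)^p$. Multiplying by the nonnegative weight $F(\nu_{\de\Omega}(x))$ and integrating over $\de\Omega$ yields
\begin{equation*}
M_F(\Omega)=\int_{\de\Omega}[F^o(x)]^p\,F(\nu_{\de\Omega}(x))\,d\haus(x)\le \left(r^F_{\max}(\Omega)\right)^p\int_{\de\Omega}F(\nu_{\de\Omega}(x))\,d\haus(x)=\left(r^F_{\max}(\Omega)\right)^p P_F(\Omega),
\end{equation*}
which is exactly the claimed inequality after dividing by $P_F(\Omega)$.

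Next I would analyze equality. The decisive point is that the weight is strictly positive: as $\nu_{\de\Omega}$ is a unit vector, \eqref{eq:lin} gives $F(\nu_{\de\Omega}(x))\ge a>0$ for $\haus$-almost every $x$. Consequently equality in the chain above forces $[F^o(x)]^p=\left(r^F_{\max}(\Omega)\right)^p$, i.e.\ $F^o(x)=r^F_{\max}(\Omega)=:r$, for $\haus$-a.e.\ $x\in\de\Omega$. Since $F^o$ is continuous and every relatively open subset of the boundary of a convex body carries positive $\haus$ measure, this identity must in fact hold at \emph{every} point of $\de\Omega$; that is, $\de\Omega\subseteq\{F^o=r\}=\de\mathcal{W}_r$, the boundary of the Wulff shape of anisotropic radius $r$ centered at the origin.

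It remains to upgrade this boundary inclusion to the set identity $\Omega=\mathcal{W}_r$, and this geometric rigidity is where I expect the only real work. Writing $\bar\Omega$ as the convex hull of $\de\Omega$ and using that $\mathcal{W}_r$ is convex with $\de\Omega\subseteq\de\mathcal{W}_r\subseteq\mathcal{W}_r$, I immediately obtain $\bar\Omega\subseteq\mathcal{W}_r$, so any interior point $o\in\mathrm{int}\,\Omega$ also lies in $\mathrm{int}\,\mathcal{W}_r$. For each unit direction $\theta$ the ray $\{o+s\theta:s\ge0\}$ meets $\de\Omega$ in a single point and, because $o\in\mathrm{int}\,\mathcal{W}_r$ and $\mathcal{W}_r$ is a bounded convex body, meets $\de\mathcal{W}_r$ in exactly one point; but the exit point from $\Omega$ lies in $\de\Omega\subseteq\de\mathcal{W}_r$ and hence must be that unique intersection with $\de\mathcal{W}_r$. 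Thus the two exit points coincide for every $\theta$, giving $\de\Omega=\de\mathcal{W}_r$ and therefore $\Omega=\mathcal{W}_r$. The converse is a direct check: if $\Omega$ is a Wulff shape $\mathcal{W}_\rho$ centered at the origin, then $F^o\equiv\rho$ on $\de\Omega$ and $r^F_{\max}(\Omega)=\rho$, so both sides of \eqref{dis_new} agree and equality holds. The delicate ingredient is this rigidity step; the radial argument sketched above is what lets me avoid any appeal to invariance of domain or to the smoothness of $\de\Omega$.
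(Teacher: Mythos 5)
Your proof is correct and follows essentially the same route as the paper: the pointwise bound $F^o(x)\le r^F_{\max}(\Omega)$ on $\de\Omega$, the strict positivity $F(\nu_{\de\Omega}(x))\ge a>0$, and the continuity of $F^o$ (so that nonempty relatively open subsets of $\de\Omega$ carry positive $\mathcal{H}^{n-1}$-measure) are exactly the ingredients the paper uses when it splits $\de\Omega$ into $S=\{x\in\de\Omega : F^o(x)<r^F_{\max}(\Omega)\}$ and its complement. The one real difference is in your favor: the paper asserts $\mathcal{H}^{n-1}(S)>0$ for a non-Wulff $\Omega$ without justifying that $S\neq\emptyset$, i.e.\ it tacitly assumes the rigidity statement that $\de\Omega\subseteq\{F^o=r\}$ forces $\Omega=\mathcal{W}_r$, whereas your radial-ray argument supplies precisely this missing step (modulo the harmless slip that $\de\mathcal{W}_r$ lies in the \emph{closed}, not the open, Wulff shape).
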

\begin{proof}
	If $\Omega$ is a Wulff shape, then
	\begin{equation}
	\dfrac{M_F(\Omega)}{P_F(\Omega)}=\left(r^F_{\max}	(\Omega)\right)^p\dfrac{P_F(\Omega)}{P_F(\Omega)}=\left(r^F_{\max}(\Omega)\right)^p.
	\end{equation}
	If $\Omega$ is not a Wulff shape, consider the set 
	\begin{equation*}
	S:=\{  x\in\de\Omega\; :\; F^o(x)<r^F_{\max}(\Omega)  \}.
	\end{equation*}
	Since $F^o$ is a continous function,  $\mathcal{H}^{n-1}(S)>0$ and, by definition of $r^F_{\max}(\Omega)$, we have that 
	$$\de\Omega\setminus S=\{x\in\de\Omega\;:\; F^o(x)=r^F_{\max}(\Omega) \} .$$
	Thus, we obtain
	\begin{multline*}
	\dfrac{M_F(\Omega)}{P_F(\Omega)}=\dfrac{\ds\int_{S} [F^o(x) ]^p\;F(\nu_{\partial\Omega}(x))\;d\mathcal{H}^{n-1}(x)+\ds\int_{\partial \Omega\setminus S} [F^o(x) ]^p\;F(\nu_{\partial\Omega}(x))\;d\mathcal{H}^{n-1}(x)}{P_F(\Omega)}\\<\dfrac{\ds\int_{S} \left(r^F_{\max}(\Omega)\right)^p\;F(\nu_{\partial\Omega})\;d\mathcal{H}^{n-1}+\ds\int_{\partial \Omega\setminus S} \left(r^F_{\max}(\Omega)\right)^p\;F(\nu_{\partial\Omega})\;d\mathcal{H}^{n-1}}{P_F(\Omega)}=\left(r^F_{\max}(\Omega)\right)^p
	\end{multline*}
\end{proof}
\begin{prop}
	Let $\Omega$ be a bounded, open convex set of $\mathbb{R}^n$ such that
	\begin{equation}\label{pos_excess}
	E_F(\Omega)>0,
	\end{equation} 
	then $\Omega$ is not a minimizer of $\mathcal{F}(\cdot)$.
\end{prop}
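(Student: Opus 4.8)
The plan is to contradict minimality by exhibiting, for small $\epsilon>0$, the bounded open convex competitor $\Omega_\epsilon=\Omega\cap T_\epsilon$ constructed above and showing $\mathcal F(\Omega_\epsilon)<\mathcal F(\Omega)$; since $\mathcal F$ is scale invariant this is exactly what it means for $\Omega$ not to minimize. As in the preceding lemmas, by the density of strictly convex sets together with the continuity of $M_F$, $P_F$, $V$ and $r^F_{\max}$ under Hausdorff convergence (which makes $\mathcal F$ and $E_F$ continuous), I may assume $\Omega$ strictly convex, so that the cap $C_\epsilon$ is a $C^1$ graph and every estimate established above is available.

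Write $M=M_F(\Omega)$, $P=P_F(\Omega)$, $V=V(\Omega)$ and $r=r^F_{\max}(\Omega)$, and recall the increments $\Delta M_F$, $\Delta P_F$, $\Delta V$, all infinitesimal as $\epsilon\to 0^+$ and all negative since $\Omega_\epsilon\subset\Omega$. First I would linearize the quotient: expanding $(V+\Delta V)^{p/n}$ to first order one finds that, for $\epsilon$ small,
\[
\mathcal F(\Omega_\epsilon)-\mathcal F(\Omega)=\frac{1}{P\,V^{p/n}}\,\mathcal B+o(|\Delta V|)+o(|\Delta P_F|),\qquad \mathcal B:=\Delta M_F-\frac{M}{P}\,\Delta P_F-\frac{p}{n}\,\frac{M}{V}\,\Delta V,
\]
so matters reduce to showing that $\mathcal B$ is negative and dominates the error, i.e. $\mathcal B\le -c\,(|\Delta V|+|\Delta P_F|)$ for some $c>0$.

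Next I would feed the momentum estimate \eqref{deltaM} into $\mathcal B$ and regroup the $\Delta V$ and the $\Delta P_F$ contributions, which yields
\[
\mathcal B\le p\Big(r^{p-1}-\tfrac{M}{nV}\Big)\Delta V+\Big(r^{p}-\tfrac{M}{P}\Big)\Delta P_F+o(\Delta P_F)+o(\Delta V)=p\,E_F(\Omega)\,\Delta V+\Big(r^{p}-\tfrac{M}{P}\Big)\Delta P_F+o(\Delta P_F)+o(\Delta V),
\]
using the very definition $E_F(\Omega)=r^{p-1}-\tfrac{M}{nV}$. Now both leading coefficients have the favourable sign: $E_F(\Omega)>0$ by hypothesis while $\Delta V<0$; moreover $E_F(\Omega)>0$ forces $\Omega$ not to be a Wulff shape centered at the origin, so the strict inequality in Lemma \ref{new_lemma} gives $r^{p}-\tfrac{M}{P}>0$ while $\Delta P_F<0$. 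Hence the two leading terms are strictly negative, of sizes comparable to $|\Delta V|$ and $|\Delta P_F|$.

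The delicate point — and the step I expect to be the main obstacle — is to guarantee that these strictly negative main terms actually dominate the remainders $o(\Delta P_F)+o(\Delta V)$. For this I would invoke the comparability of the two increments: from \eqref{deltaV}, the graph estimate $-\Delta P_F\ge C(\Omega)\int_{A'_\epsilon}\|\nabla g\|^2\,dx'$ and \eqref{inequality}, one checks that $|\Delta V|$ and $|\Delta P_F|$ are of the same order in $\epsilon$, so that $o(\Delta P_F)+o(\Delta V)$ is genuinely negligible next to $p\,E_F(\Omega)\,|\Delta V|+(r^{p}-\tfrac{M}{P})\,|\Delta P_F|$. It is precisely here that the strict (not merely non-strict) forms of both $E_F(\Omega)>0$ and Lemma \ref{new_lemma} are needed, since having both main terms strictly negative is what lets me absorb both families of error terms at once. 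Concluding $\mathcal B<0$ for all $\epsilon$ small gives $\mathcal F(\Omega_\epsilon)<\mathcal F(\Omega)$ and the desired contradiction; the general convex case then follows from the same density reduction used for the earlier lemmas.
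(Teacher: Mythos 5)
Your proof is correct and follows essentially the same route as the paper's: linearize the quotient, insert the momentum bound \eqref{deltaM}, regroup to expose $p\,E_F(\Omega)\,\Delta V+\left((r^F_{\max}(\Omega))^p-\tfrac{M_F(\Omega)}{P_F(\Omega)}\right)\Delta P_F$, and invoke Lemma \ref{new_lemma} (valid strictly since $E_F(\Omega)>0$ rules out a Wulff shape centered at the origin) together with $\Delta V<0$, $\Delta P_F<0$ to get $\Delta\mathcal{F}<0$. Your extra concern about comparability of $|\Delta V|$ and $|\Delta P_F|$ is not actually needed — each strictly negative leading term absorbs its own family of error terms $o(\Delta V)$ and $o(\Delta P_F)$ separately — but it does not harm the argument.
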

\begin{proof}

	Using $\eqref{deltaM}$, we have that
	\begin{align}\label{delta_F}&\Delta \mathcal{F} =\dfrac{1}{V(\Omega)^{\frac{p}{n}}P_F(\Omega)}\left(\Delta M_{F}-\dfrac{\Delta P_F}{P_F(\Omega)}M_{F}(\Omega)-\frac{p}{n}\dfrac{\Delta V}{V(\Omega)}M_{F}(\Omega)\right)+o(\Delta P_F)+o(\Delta V)=\\\nonumber&\leq\dfrac{1}{V(\Omega)^{\frac{p }{n}}P_F(\Omega)}\left[p\left(\left(r^F_{\max}(\Omega)\right)^{p-1}-\frac{M_{F}(\Omega)}{nV(\Omega)}\right)\Delta V+   \right.\\\nonumber&\ \ \ \ \ \ \ \ \ \ \ \ \ \ \ \ \ \ \ \ \ \ \ \ \ \ \ \ \ \ \left. \left((r^F_{\max}(\Omega))^p-\frac{M_{F}(\Omega)}{P_F(\Omega)}\right) \Delta P_F   \right]+o(\Delta P_F)+o(\Delta V)=\\\nonumber&=\dfrac{1}{V(\Omega)^{\frac{p}{n}}P_F(\Omega)}\left[pE_F(\Omega)\Delta V+\left((r^F_{\max}(\Omega))^p-\frac{M_F(\Omega)}{P_F(\Omega)}\right) \Delta P_F   \right]+o(\Delta P_F)+o(\Delta V)
	\end{align}
	Since \eqref{pos_excess} holds, $\Omega$ cannot be a ball centered at the origin.
	From Lemma \ref{new_lemma},  follows  that 
	$$ (r^F_{\max}(\Omega))^p-\frac{M_F(\Omega)}{P_F(\Omega)}>0.$$
	Considering also that $\Delta V<0$ and $\Delta P_F<0$, we can conclude that 
	$$ \Delta \mathcal{F}<0.$$
	
\end{proof}
\subsection{Wulff shapes are the unique minimizers having vanishing Excess}
\begin{prop}
	Let $\Omega$ be a bounded, open convex set of $\mathbb{R}^n$ such that
	\begin{equation}\label{zero_excess}
	E_F(\Omega)=0,
	\end{equation}
	then either $\Omega $ is the Wulff shape centered at the origin or it is not a minimizer of $\mathcal{F}(\cdot)$. 
\end{prop}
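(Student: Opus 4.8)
The plan is to split the argument according to the regularity of $\Omega$, treating first the smooth case $\Omega\in C_F^{\infty,+}$ by a direct first–variation computation, and then reducing the general case to this one by the same approximation scheme used for negative excess in Step $2$.

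Suppose first that $\Omega\in C_F^{\infty,+}$, so that $H^F_{\de\Omega}>0$ on $\de\Omega$. Since $E_F(\Omega)=0$, the inequality \eqref{imp} becomes
$$
\left(F^o(x)\right)^{p-1}\langle\nabla F^o(x),\nu^F_{\de\Omega}(x)\rangle-\frac{M_F(\Omega)}{nV(\Omega)}\le 0\qquad\text{on }\de\Omega .
$$
Inserting this into the shape–derivative formula \eqref{imp_2} (computed along the IAMCF, i.e.\ for $\varphi=1/H^F_{\de\Omega}$) and using $F(\nu_{\de\Omega})/H^F_{\de\Omega}>0$, I get $\mathcal F'(\Omega)\le 0$. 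On the other hand the IAMCF keeps $\Omega(t)$ convex for $t\ge 0$, so if $\Omega$ were a minimizer then $\mathcal F(\Omega(t))\ge\mathcal F(\Omega)$ would force the right derivative $\mathcal F'(\Omega)\ge 0$. Hence $\mathcal F'(\Omega)=0$, and since the integrand in \eqref{imp_2} is nonpositive with vanishing integral it must vanish $\haus$–a.e.:
$$
\left(F^o(x)\right)^{p-1}\langle\nabla F^o(x),\nu^F_{\de\Omega}(x)\rangle=\frac{M_F(\Omega)}{nV(\Omega)}=\left(r^F_{\max}(\Omega)\right)^{p-1}\qquad\text{a.e.}
$$

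The heart of the proof is then the equality analysis. By the anisotropic Cauchy–Schwarz inequality together with $F(\nabla F^o(x))=F^o(\nabla F(\nu_{\de\Omega}(x)))=1$, one has $\langle\nabla F^o(x),\nu^F_{\de\Omega}(x)\rangle\le 1$, while $F^o(x)\le r^F_{\max}(\Omega)$ and $p-1>0$. Since (for a.e.\ $x\ne 0$) the product of these two strictly positive factors equals $(r^F_{\max}(\Omega))^{p-1}>0$, both inequalities must be saturated; in particular $F^o(x)=r^F_{\max}(\Omega)$ for $\haus$–a.e.\ $x\in\de\Omega$. By continuity this holds on all of $\de\Omega$, so $\de\Omega\subseteq\{F^o=r^F_{\max}(\Omega)\}=\de\big(r^F_{\max}(\Omega)\,\mathcal W\big)$; as $\Omega\subseteq r^F_{\max}(\Omega)\,\overline{\mathcal W}$ by definition of $r^F_{\max}$, the two convex bodies coincide and $\Omega=r^F_{\max}(\Omega)\,\mathcal W$ is a Wulff shape centered at the origin. (Equivalently, equality in \eqref{dis_new} of Lemma \ref{new_lemma} is attained.)

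It remains to handle $\Omega\notin C_F^{\infty,+}$, which is automatically not a Wulff shape, so here I must show $\Omega$ is not a minimizer. As in Step $2$, the plan is to take a decreasing sequence $(\Omega_k)\subset C_F^{\infty,+}$ converging to $\Omega$ in the Hausdorff sense with $V,P_F,M_F,r^F_{\max}$ all converging, run the IAMCF from each $\Omega_k$, and combine the radius bound $r^F_{\max}(\Omega_k(t))\le r^F_{\max}(\Omega_k)e^{t/(n-1)}$ with the anisotropic Heintze–Karcher inequality to show that $\mathcal F$ decreases strictly along the flow unless the evolving set is already a Wulff shape; passing to the limit then contradicts the minimality of $\Omega$, by computations analogous to \cite[Proposition 2.4]{bfnt}. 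I expect the main obstacle to be exactly this non–smooth step: the first–variation formula \eqref{imp_2} is available only for $C_F^{\infty,+}$ sets, so the strict decay of $\mathcal F$ has to be transported to the merely convex limit $\Omega$ through the flow estimates and the continuity of the geometric quantities, which is the delicate point of the whole argument.
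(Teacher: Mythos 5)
Your smooth case is correct, and it is a genuinely different route from the paper's: you run the first variation \eqref{imp_2} along the IAMCF, use minimality to force the right derivative to vanish, and then do an equality analysis in the anisotropic Cauchy--Schwarz inequality to get $F^o\equiv r^F_{\max}(\Omega)$ on $\de\Omega$, i.e.\ $\Omega$ is a Wulff shape centered at the origin. The paper never argues this way for the zero-excess case. Its proof is the truncation argument of Section 3.4: cut $\Omega$ with the halfspace $T_\epsilon$ near $x^F_{\max}(\Omega)$, insert $E_F(\Omega)=0$ into the expansion \eqref{delta_F} so that the $\Delta V$ term (whose coefficient is $pE_F(\Omega)$) disappears, use \eqref{inequality} to absorb $o(\Delta V)$ into $o(\Delta P_F)$, and then apply Lemma \ref{new_lemma} together with $\Delta P_F<0$: either $\left(r^F_{\max}(\Omega)\right)^p=M_F(\Omega)/P_F(\Omega)$, in which case $\Omega$ is a Wulff shape centered at the origin, or the coefficient of $\Delta P_F$ is strictly positive and $\Delta\mathcal{F}<0$, so $\Omega$ is not a minimizer. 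This argument requires no $C_F^{\infty,+}$ regularity and no flow, so it settles all bounded open convex sets at once.

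The genuine gap is your non-smooth case, and the tools you cite cannot close it. The Step 2 scheme you want to imitate works only because strict negativity of the excess is a stable, quantitative condition: the approximants satisfy $E_F(\Omega_k)\le E_F(\Omega)/2<0$ for large $k$, so the bound
\begin{equation*}
\frac{d}{dt}\mathcal{F}(\Omega_k(t))\le \frac{p}{P_F^2 V^{p/n}}\,E_F(\Omega_k(t))\int_{\de\Omega_k(t)}\frac{d\haus}{H^F_{\de\Omega_k(t)}}
\end{equation*}
stays uniformly negative on a fixed short time interval, $\mathcal{F}(\Omega_k(t_0))\le\mathcal{F}(\Omega_k)-\delta$ with $\delta$ independent of $k$, and passing to the limit contradicts minimality of $\Omega$. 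When $E_F(\Omega)=0$ this mechanism collapses: $E_F(\Omega_k)\to 0$ with uncontrolled sign, the derivative bound degenerates to $\mathcal{F}'\le o(1)$, and neither the Heintze--Karcher inequality nor the radius estimate $r^F_{\max}(\Omega_k(t))\le r^F_{\max}(\Omega_k)e^{t/(n-1)}$ produces a decrement that survives $k\to\infty$. Your pivotal claim, that $\mathcal{F}$ ``decreases strictly along the flow unless the evolving set is already a Wulff shape,'' is essentially the main theorem itself, so the reduction as written is circular. For non-smooth $\Omega$ you should drop the flow entirely and use the cap-cutting perturbation as the paper does; that is precisely why Section 3.4 and Lemma \ref{new_lemma} are set up for arbitrary convex sets.
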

\begin{proof}
	From \eqref{inequality}, \eqref{zero_excess}, \eqref{delta_F}, we obtain the following expression
	$$ \Delta \mathcal{F}(\Omega)=\dfrac{1}{V(\Omega)^{\frac{p}{n}}P_F(\Omega)}\left[\left((r^F_{\max}(\Omega))^p-\frac{M_F(\Omega)}{P_F(\Omega)}\right) \Delta P_F   \right]+o(\Delta P_F).$$
	If $$ (r^F_{\max}(\Omega))^p=\frac{M_F(\Omega)}{P_F(\Omega)},$$
	then $\Omega$ is a Wulff shape centered at the origin. If  $ \Delta \mathcal{F}<0 $, then $\Omega$ is not a minimizer. Thus, we have proved the desired claim.
\end{proof}

\end{document}